\newtheorem{theorem}{Theorem}[section]
\newtheorem*{theorem*}{Theorem}
\newtheorem{lemma}[theorem]{Lemma}
\newtheorem*{lemma*}{Lemma}
\newtheorem{corollary}[theorem]{Corollary}
\newtheorem{proposition}[theorem]{Proposition}
\theoremstyle{definition}
\newtheorem{definition}[theorem]{Definition}
\newtheorem{example}[theorem]{Example}
\newtheorem{remark}[theorem]{Remark}
\def\multiset#1#2{\ensuremath{\left(\kern-.3em\left(\genfrac{}{}{0pt}{}{#1}{#2}\right)\kern-.3em\right)}}
\newcommand{\poset}{\mathbf{Poset}}
\newcommand{\operad}{\mathcal{W}}
\newcommand{\spop}{\mathcal{SP}}
\newcommand{\chain}[1][n]{\langle #1\rangle}
\newcommand{\inv}[1]{{#1}^{-1}}
\newcommand{\nsomega}[2][n]{\Omega(#2,#1)}
\newcommand{\somega}[2][n]{\Omega^{\circ}(#2,#1)}
\newcommand{\zetaf}[1][P]{\mathfrak{Z}\left(#1\right)}
\newcommand{\zetafp}[1][P]{\mathfrak{Z}^+\left(#1\right)}
\newcommand{\zetafm}[1][P]{\mathfrak{Z}^{-}\left(#1\right)}
\providecommand{\keyword}[1]{\textbf{\textit{Keywords---}} #1}
\begin{document}
\title{An algebra over the operad of posets and structural binomial identities}

\author{José Antonio Arciniega-Nevárez$^1$}
\email{ja.arciniega@ugto.mx}
\address{$^1$División de Ingenierías, Universidad de Guanajuato, Guanajuato, México}

\author{Marko Berghoff$^2$}
\email{berghoffmj@gmail.com}
\address{$^2$Mathematical Institute, University of Oxford, Oxford, UK}

\author{Eric Rubiel 
Dolores-Cuenca$^3$}
\email{eric.rubiel@u.northwestern.edu}
\address{$^3$ Department of mathematics, Yonsei University, Seoul, Korea}

\maketitle

\begin{abstract} 
We study generating functions of strict and non-strict order polynomials of series-parallel posets, called order series. These order series are closely related to Ehrhart series and $h^*$-polynomials of the associated order polytopes. We explain how they can be understood as algebras over a certain operad of posets. 
Our main results are based on the fact that the order series of chains form a basis in the space of order series. This allows to reduce the search space of an algorithm that finds for a given power series $f(x)$, if possible, a poset $P$ such that $f(x)$ is the generating function of the order polynomial of $P$. In terms of Ehrhart theory of order polytopes, the coordinates with respect to this basis describe the number of (internal) simplices in the canonical triangulation of the order polytope of $P$. Furthermore, we derive a new proof of the reciprocity theorem of Stanley. As an application, we find new identities for binomial coefficients and for finite partitions that allow for empty sets, and we describe properties of the negative hypergeometric distribution.
\end{abstract}

\keyword{
Binomial Coefficient, Ehrhart series, Generating function, Negative Hypergeometric Distribution, Order Polynomial, Order Series, Partitions, Series Parallel Poset, Vandermonde Identity}

\section{Introduction}
For every poset ${P}$,  Stanley \cite{beginning} considered the problem of counting the numbers $\somega[n]{P}$ and $\nsomega[n]{P}$ of strict and non-strict order preserving maps from $P$ to the \emph{$n$--chain} $\chain[n]=\{1<\ldots<n\}$. 
In this paper we study the generating functions
\begin{equation*}
\zetaf[P]=\sum_{n=1}^\infty \somega[n]{P} x^n \quad \text{ and } \quad \zetafp[P]=\sum_{n=1}^\infty\nsomega[n]{P}x^n.
\end{equation*}
We call them the \emph{strict} and \emph{non-strict order series} of the poset $P$. 

These series are not completely new: Let $Poly(P)$ denote the order polytope of $P$ and $E_{Poly(P)}$ its Ehrhart polynomial. Using results of Stanley \cite{stanley:decompositions} and Macdonald \cite{M} we find
 \begin{align*}
     \zetafp[P] & = \sum_{n=1}^\infty \Omega(P,n) x^{n}  
 = x+\sum_{n=2}^\infty E_{Poly(P)}(n-1) x^n
     =x \frac{h^*(x)}{(1-x)^{|P|+1}}.
 \end{align*}
In other words, up to a degree shift we work with the Ehrhart series of $Poly(P)$, or its $h^*$-polynomial (aka $h^*$-vector) if we use the basis $\{\frac{1}{(1-x)^{i+1}}\}_{0\leq i\leq |P|}$ on the Ehrhart series of $Poly(P)$. 

For a $n$--chain we write $\zetaf[{\chain[n]}]=\zetaf[n]$ and $\zetafp[{\chain[n]}]=\zetafp[n]$. One finds (see Section \ref{sec:orderseries})
\begin{equation*}
\zetaf[n]= \frac{x^n}{(1-x)^{n+1}}\quad \text{and} \quad \zetafp[n]=\frac{x}{(1-x)^{n+1}}.
\end{equation*}

Computing the order series or, equivalently, the order polynomial of a poset is difficult in general. However, for certain families of posets, for instance series-parallel posets, it can be done algorithmically in polynomial time \cite{algo}. Note that it suffices to know one of the two order series, as this determines the other by a version of combinatorial reciprocity (see \cite{beginning} for order polynomials, Theorem \ref{thm:reci} and Proposition \ref{prop:equiv} for order series, and \cite{crt} in general). In the following we focus therefore on the strict order series $\zetaf$.

Posets have an operad structure, see \cite{OperadofPosets,Doppelgangers}. Here we consider an operad $\spop$ generated by concatenation and disjoint union of posets. Recall that series-parallel posets are built out of the singleton poset by iterating these two operations. Thus, series-parallel posets form an algebra over the operad $\spop$. We show below that the same holds for their order series (see Proposition~\ref{prop:osoperad}, Proposition~\ref{prop:operadmap} and Proposition~\ref{prop:mu+}). Furthermore, we demonstrate how chains are the basic building blocks not only for the set of series-parallel posets, but also for their order series (Section \ref{sec:orderseries}) as well as their order polytopes (Section \ref{subsec:EhrhartTheory}). 

The map that assigns to a poset its order series is not injective, that is, 
several posets can have the same order series. However, for some families of posets we can compute the preimages:  Given a power series $f(x)$ we describe in Section \ref{sec:algo} an algorithm to search for the posets $P$ that satisfy $f=\zetaf[P]$. 

Our main result is based on the observation that the (strict) order series of chains form a basis $\left\{\frac{x^i}{(1-x)^{i+1}}\right\}_{i \in \mathbb{N}}$ for the space of (strict) order series of series-parallel posets (Proposition \ref{prop:closed1}, see also Remark \ref{remark:reciprocity:coef}). This allows us to prove that

\begin{itemize}
\item The order series of every series-parallel poset $P$ is a finite sum of order series of chains, $\zetaf[P]=\sum c_i\zetaf[i]$. See Corollary~\ref{Cor:lc}.
\item The coefficients $c_i$ are non-negative integers. The indices $i$, such that $c_i \neq 0$, encode topological information of (the Hasse diagram of) $P$. See Proposition~\ref{prop:rep}.
\item The coefficients $c_i$ encode combinatorial information about the canonical triangulation of the order polytope of $P$. See Lemma~\ref{lemma:a}. In  Section~\ref{subsec:EhrhartTheory}, we compare our results with Ehrhart theory.
\item The existence of the operadic action on order series implies new binomial identities. See Section \ref{sec:comb}.
\end{itemize}

In \cite{D} Drinfeld describes a power series that corrects the notion of associativity. Motivated by the construction of Drinfeld, this paper started by the observation that order series can distinguish certain posets. For example, the two posets $\{a<b\}$ and $\{a,b\}$ differ by their order and their order series differ as well: $\frac{x^2}{(1-x)^3}\neq\frac{x}{(1-x)^2}+2\frac{x^2}{(1-x)^3}$.
\subsection{Related work}\label{section:rw}

We give a (short) account of closely related work.

Consider the category of finite sets, and let $\textbf{B}$ be the skeleton of the category whose objects are finite sets but the only morphisms are isomorphisms. The theory of $\textbf{B}$-species studies functors from $\textbf{B}$ to finite sets and power series associated to them \cite{species, grupoid}. A M\"obius-species \cite{mobius} is a functor from $\textbf{B}$ to the category of finite posets. In comparison, we study an operadic homomorphism from the algebra of series-parallel posets to power series. For example, the poset $\{ a<b<c,a<b'<c \}$ has the associated power series \begin{equation*}\frac{x^3}{(1-x)^4}+2\frac{x^4}{(1-x)^5}=x^3+6x^4+\cdots.\end{equation*} 
There is no $\textbf{B}$-species or M\"obius-species with the corresponding generating function, because a species only depends on the number of elements of the input set. We believe our series are a topological version of species. 

 The book \cite{eulerianB} introduces generating functions of order polynomials of posets to study partitions, Ehrhart polynomials and descent statistics. In contrast, we focus on the problem of computing the order series of a given poset. Moreover, we restrict our analysis to series-parallel posets since they have a rich algebraic structure (i.e., the operad structure referred to above, and explained below). In particular, all posets considered here are finite. 
 
A binomial poset $P$ is a locally finite poset with an element $\hat{0}$ so that $ \hat{0} \leq a$ for all $a\in P$, contains an infinite chain,  every interval $[s,t]$ is graded, and any two $n$-intervals contain the same number of maximal chains for any $n$ (see \cite{enumerative}).
For instance, the set $\mathbb{N}$ with the usual linear order is a binomial poset. To each binomial poset $P$ we can associate a subalgebra $R(P)$ of the incidence algebra of $P$: It consists of all functions $f$ such that $f(x,y)$ only depends on the length of the interval $[x,y]$. The algebra $R(P)$ is isomorphic to an algebra of generating functions with the usual product of functions. Stanley's construction can be used to study properties of invertible elements. In comparison, we study an algebra over the set-operad of series parallel posets. This formalism does not require an underlying vector space, and it is constructed from two operations that differ from the product of functions, the Hadamard product and the ordinal sum of power series which is a deformation of the usual product of functions. The generating functions that we study are algebraic of degree 1 according to \cite{enumerative2}.

In this article, a $n$-labeling of a poset $P$ is a map to $\chain[n]$ that preserves the order. This is a different notion to \cite{plabel} in which a $n$-labeling on a poset is defined as a poset $P$ and a map of sets. 

In regard to labelings of Hasse diagrams, there are many different variations of labelings of (directed) graphs. See for instance \cite{label}. However, we could not find a notion that is related to ours.

In regard to binomial identities, note that if we let $t=0$ in Proposition~\ref{Proposition:pc} we recover the Chu-Vandermonde identity. Most identities in \cite{lcomb} fix the top part of the combinatorial expression and vary the bottom part. For example, consider the generalized Vandermonde identity,
\begin{equation}\label{eqn:van}
{q_1+\cdots +q_p \choose n}=\sum_{n_1+\cdots n_p=n}\prod_{i=1}^p{q_i\choose n_i}.  
\end{equation}
Fixing a partition $q_1+\cdots +q_p$, the formula relates the binomial coefficient ${q_1+\cdots +q_p \choose n}$ with the product of binomial coefficients ${q_i \choose n_i}$ over all possible $p$-partitions $n=\sum n_i$.
In comparison, in Proposition~\ref{prop:new} we fix a partition $n_1+\cdots +n_p$ and relate the binomial coefficient ${q \choose n_1+\cdots +n_p}$ with the product of binomial coefficients ${q_i \choose n_i}$ where the terms $q_i$  depend on $q$ and the length $p$ of the partition. 

While the multivariate negative hypergeometric distribution is a well known topic in probability theory, as far as we are aware, the proofs in Section \ref{sec:prob} are new.

\section{Order series}\label{sec:orderseries}

Let $\poset$ denote the category of finite partially ordered sets whose morphisms are the strict order preserving maps. Let $\poset+$ denote the category with the same objects, but whose morphisms are non-strict order preserving (i.e., they satisfy $x<y \Rightarrow f(x)\leq f(y)$).

\begin{definition}
Define $\hom_{\poset+}(P,n)$ as the set of \emph{(non-strict) order preserving maps} from $P$ to $\chain$ and $\hom_{\poset}(P,n)$ as the set of \emph{strict order preserving maps} from $P$ to $\chain$. Define the order polynomials $\nsomega[n]
{P}$ and $\somega[n]{P}$ as the cardinality $|\hom_{\poset+}(P,n)|$ and $|\hom_{\poset}(P,n)|$, respectively. 
\end{definition}

For example $\somega[n]{\chain[m]}={n\choose m}$, while $\nsomega[n]{\chain[m]}=\multiset{n}{m}={n+m-1\choose m}$.

\begin{definition}
For a poset $Q$ we define a formal power series, called its \emph{order series}, by
\begin{equation*}
\zetaf[Q]=\zetaf[Q,x]=\sum_{n= 1}^\infty \somega[n]{Q}x^n.
\end{equation*}
\end{definition}
We have
\begin{gather}
    \zetaf[m]=
 \sum_{n=m}^\infty {n\choose m} x^n =\frac{x^m}{(1-x)^{m+1}}.
  \label{eqn:other1}
\end{gather} 
For a proof of the second equality see \cite[Equation~(1.5.5)]{gf}, \cite[Equation~(1.3)]{lcomb} or \cite[Equation~(1.3)]{eulerianB}.

If $x$ is the common variable for the power series $\zetaf[Q]$ and $\zetaf[W]$, we define 
\begin{equation}
\zetaf[Q]*\zetaf[W]=\zetaf[Q](1-x)\zetaf[W].\label{eq:*}  
\end{equation}
Note that here we are using the Cauchy product of power series.

We denote the \emph{Hadamard product} of power series by
\begin{equation*}\sum_{n=1}^\infty a_n x^n \sqcup \sum_{n=1}^\infty b_n x^n=\sum_{n=1}^\infty a_n b_n x^n.\end{equation*} We interpret the Hadamard product as disjoint union of power series.

The reason to work with power series instead of polynomials is that it highlights the structure of an underlying operad of posets (for an introduction to operads we refer to \cite{ope}).  Consider the operad $\spop$ generated by a binary associative and commutative operation $\sqcup$ and a binary associative operation $*$. The only unary operation is the identity. The action of $S_n$ is given by permutations on the inputs of the operations.

\begin{proposition}\label{prop:osoperad}
Order series are an algebra over the operad $\spop$. The above defined operations on power series
$*$ and $\sqcup$ are both associative, and $\sqcup$ is commutative.\end{proposition}

The set of series-parallel posets $SP$ is itself an algebra over the operad $\spop$.   Here the action of $*$ is the ordinal sum (or concatenation) of posets, and the action of $\sqcup$ is the disjoint union of posets.

For a finit posets $Q$ and $W$, the inclusions of $Q$ and $W$ in $Q*W$ allow us to see $Q$ and $W$ as a subposets of the concatenation. There is thus a function
\begin{equation}\label{eqn:surj1}
G_n:\bigcup_{l=1}^{n-1} 
\hom_{\poset}(Q,\chain[l])
\times \hom_{\poset}(W,\chain[n-l])\to \hom_{\poset}(Q*W,\chain[n]).
\end{equation}
On the term  $\hom_{\poset}(Q,\chain[l])
\times \hom_{\poset}(W,\chain[n-l])$ the function is given by 
$G_n(h,k)(v)=h(v)$, if $v\in Q$, and  $G_n(h,k)(v)=k(v)+l$, if $v\in W$. This function is surjective. Furthermore, for any element $f:Q*W\rightarrow \chain$ let $M$ be the maximum of $f|_{Q}$ and $m$ the minimum of $f|_{W}$ (actually $m=M+i$ for some integer $i\geq 0$). If $j\in\{0,1,\cdots,i-1\}$, we can partition $\chain$ in $\chain[M+j]$ and $\chain[n-M-j]$ in order to define $h_j=f|_Q:Q\to \chain[M+j]$, and $k_j:W\to \chain[n-M-j]$, given by $k_j(v)=f|_W(v)-M-j$.
See Figure~\ref{fig:multi1} for an example. 

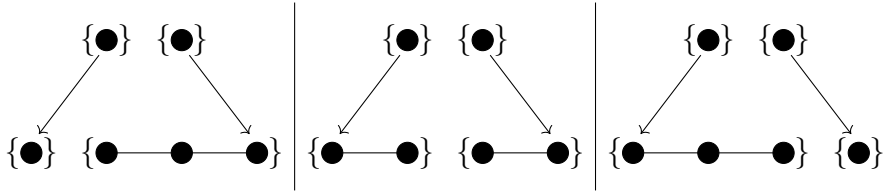
\begin{figure}[H]
\centering
\begin{tikzpicture}
\draw(3.5,-0.5) -- (3.5,2);
\draw(7.5,-0.5) -- (7.5,2);
\filldraw[black] (1,1.5) circle (4pt) node[anchor=east]{\{} node[anchor=west]{\}};
\draw[->] (0.9,1.3)--(0.1,0.25);
\draw[->] (6.1,1.3)--(6.9,0.25);
\filldraw[black] (2,1.5) circle (4pt) node[anchor=east]{\{} node[anchor=west]{\}};
\filldraw[black] (5,1.5) circle (4pt) node[anchor=east]{\{} node[anchor=west]{\}};
\draw[->] (4.9,1.3)--(4.1,0.25);
\draw[->] (10.1,1.3)--(10.9,0.25);
\filldraw[black] (6,1.5) circle (4pt) node[anchor=east]{\{} node[anchor=west]{\}};
\filldraw[black] (9,1.5) circle (4pt) node[anchor=east]{\{} node[anchor=west]{\}};
\draw[->] (8.9,1.3)--(8.1,0.25);
\draw[->] (2.1,1.3)--(2.9,0.25);
\filldraw[black] (10,1.5) circle (4pt) node[anchor=east]{\{} node[anchor=west]{\}};
\filldraw[black] (0,0) circle (4pt) node[anchor=east]{\{} node[anchor=west]{\}};
\filldraw[black] (1,0) circle (4pt) node[anchor=east]{\{};
\draw(1,0) -- (2,0);
\filldraw[black] (2,0) circle (4pt);
\draw(2,0) -- (3,0);
\filldraw[black] (3,0) circle (4pt) node[anchor=west]{\}};
\filldraw[black] (4,0) circle (4pt) node[anchor=east]{\{};
\draw(4,0) -- (5,0);
\filldraw[black] (5,0) circle (4pt)node[anchor=west]{\}};
\filldraw[black] (6,0) circle (4pt) node[anchor=east]{\{};
\draw(6,0) -- (7,0);
\filldraw[black] (7,0) circle (4pt)node[anchor=west]{\}};
\filldraw[black] (8,0) circle (4pt) node[anchor=east]{\{};
\draw(8,0) -- (9,0);
\filldraw[black] (9,0) circle (4pt);
\draw(9,0) -- (10,0);
\filldraw[black] (10,0) circle (4pt)node[anchor=west]{\}};
\filldraw[black] (11,0) circle (4pt)node[anchor=east]{\{}node[anchor=west]{\}};
\end{tikzpicture}
\caption{The function from $\langle 2\rangle$ to $\langle 4\rangle $ that sends $1\mapsto 1,$ and $2\mapsto 4$ can be split in 3 different ways: $\{\langle 1\rangle\rightarrow\langle 1\rangle\}\times\{ \langle 1\rangle\rightarrow \langle 3\rangle\}, \{\langle 1\rangle\rightarrow\langle 2\rangle\}\times\{ \langle 1\rangle\rightarrow \langle 2\rangle\} $ and $\{\langle 1\rangle\rightarrow\langle 3\rangle\}\times\{ \langle 1\rangle\rightarrow \langle 1\rangle$\}.}\label{fig:multi1}
 \end{figure}

For an integer $n>1$ define  \begin{equation*}
\triangle
\chain[n]=\bigsqcup_{i=1}^{n} (\chain[i],\chain[n-i]).
\end{equation*}
Also define $ \hom_{\poset+}((X,Y),\triangle\chain[n]) $ as the set of pairs of non strict order preserving maps  $(k:X\to\chain[i],h:Y\to\chain[n-i]), 1\leq i\leq n$. 
Furthermore, a computation shows
\begin{equation}
    \sum_{i=1} \somega[i]{X}x^i \sum_{j=1} \somega[j]{Y}x^j = \sum_{n=2}\somega[{\triangle\chain[n]}]{(X,Y)} x^n. 
\end{equation}

We will need the following auxiliary lemma.
\begin{lemma}\label{Lemma:ExactSequence1}
Assume that there is a sequence of sets
\begin{equation*}
\xymatrix{A\ar[r]^i& B\ar[r]^p &C\ar@/^/[l]^s,}
\end{equation*}
where 
\begin{itemize}
\item $i:A\rightarrow B$ is injective, 
\item $p:B\rightarrow C$ is surjective,
\item there is a split $s:C\to B$, such that $p\circ s=id$ and $s(C)\cap i(A)=\varnothing$, and
\item for every point $c\in C, s(c)\sqcup( i(A)\cap p^{-1}(c))=p^{-1}(c)$.
\end{itemize}

Then $s(C)\sqcup i(A)=B $.
\end{lemma}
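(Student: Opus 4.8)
The plan is to reduce the global decomposition to the fiberwise one supplied by the fourth hypothesis, using that the fibers of $p$ partition $B$. The first step is to observe that the union $s(C)\cup i(A)$ is automatically disjoint: this is precisely the third hypothesis $s(C)\cap i(A)=\varnothing$, so the symbol $\sqcup$ on the left-hand side is justified and it remains only to prove the set equality $s(C)\cup i(A)=B$. The inclusion $\subseteq$ is immediate, since both $i$ and $s$ take values in $B$.

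For the reverse inclusion I would work fiberwise. For any map $p\colon B\to C$ one has $B=\bigcup_{c\in C}p^{-1}(c)$, so applying the fourth hypothesis fiber by fiber and taking the union over all $c\in C$ gives
\[
B=\bigcup_{c\in C}p^{-1}(c)=\bigcup_{c\in C}\Big(\{s(c)\}\,\sqcup\,\big(i(A)\cap p^{-1}(c)\big)\Big).
\]
Now I would simplify the two families separately: $\bigcup_{c\in C}\{s(c)\}=s(C)$ by definition, while
\[
\bigcup_{c\in C}\big(i(A)\cap p^{-1}(c)\big)=i(A)\cap\bigcup_{c\in C}p^{-1}(c)=i(A)\cap B=i(A),
\]
using $i(A)\subseteq B$. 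Combining these yields $B=s(C)\cup i(A)$, and together with the disjointness already noted this is the desired $B=s(C)\sqcup i(A)$. Alternatively one can argue pointwise: given $b\in B$, set $c=p(b)$; then $b\in p^{-1}(c)$, so by the fourth hypothesis either $b=s(c)\in s(C)$ or $b\in i(A)\cap p^{-1}(c)\subseteq i(A)$.

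I do not expect a genuine obstacle: the entire content of the statement is encoded in the fourth hypothesis, and the proof is essentially set-theoretic bookkeeping. The only points needing a line of care are confirming that the union is disjoint (the third hypothesis) and that the fiberwise pieces reassemble into exactly $s(C)$ and $i(A)$. Neither the surjectivity of $p$ nor the injectivity of $i$ is needed for the equality of sets, although they become relevant if one wishes to upgrade the decomposition to a bijection $B\cong A\sqcup C$ for the cardinality counts used later (note that $s$ is automatically injective because $p\circ s=\mathrm{id}$).
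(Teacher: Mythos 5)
Your proof is correct. The paper in fact states this lemma \emph{without} proof (it is treated as routine set-theoretic bookkeeping and immediately put to use in Proposition~\ref{aux1}), so there is no competing argument to compare against; your fiberwise decomposition $B=\bigcup_{c\in C}p^{-1}(c)$ followed by the fourth hypothesis is exactly the verification the authors intended to leave to the reader, and your closing observation --- that surjectivity of $p$ and injectivity of $i$ are irrelevant to the set equality but matter for the induced bijection $B\cong A\sqcup C$ used in the counting arguments --- is accurate as well.
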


 A sequence satisfying the previous lemma is called a \emph{splitting sequence of sets.}

\begin{proposition}\label{aux1}
Let $Q$ and $W$ be finite posets. Then
\begin{equation*}\label{eq:exact-seq-1}
\xymatrix{  \hom_{\poset}((Q,W),\triangle\chain[n-1])  
\ar[r]& \hom_{\poset}((Q,W),\triangle\chain[n])\ar[r]&\hom_{\poset}(Q*W,\chain[n])}
\end{equation*}
is a splitting sequence of sets.
\end{proposition}

\begin{proof}
For $k\in \hom_{\poset}(W,\chain[n-1])$ define $(k+1)\in \hom_{\poset}(W,\chain[n])$ by $(k+1)(v)=k(v)+1$. The shifting function
\begin{equation*}
Sh \colon \bigcup_{a+b=n-1} \hom_{\poset}(Q,\chain[a])
\times \hom_{\poset}(W,\chain[b]) \to \bigcup_{a+(b+1)=n}
\hom_{\poset}(Q,\chain[a])
\times \hom_{\poset}(W,\chain[b+1]),
\end{equation*}
given by sending $(h,k)$ into $(h, k+1)$ is injective. Let $G_n$ be  the function defined in \eqref{eqn:surj1}, and let $G_n^{-1}(f)$ be the  preimage of $f\in \hom_{\poset}(Q*W,\chain[n]) $. 
Then 
\begin{equation*}
G_n^{-1}(f)- Sh\left(\bigcup_{a+b=n-1}
\hom_{\poset}(Q,\chain[a])
\times
\hom_{\poset}(W,\chain[b])
\right)
\end{equation*}
has only one element, namely the pair $(h,k)$ where $h=f|_Q:Q\rightarrow \chain[m]$ and $k:W\rightarrow \chain[n-m+1]$ is given by $k(v)=f|_W(v)-m+1$ (again, $m$ is the minimum of $f|_W$). 

Choosing this unique point we define a section 
\begin{equation*}
s:\hom_{\poset}(Q*W,\chain[n])\to \bigcup_{c+d=n} \hom_{\poset}(Q,\chain[c])\times \hom_{\poset}(W,\chain[d]),
\end{equation*}
whose image is disjoint from the image of the shifting function $Sh$. This holds because $k(m)=1$ but no function of the form $g+1$ has $1$ in the codomain. 
Then the hypothesis of Lemma \ref{Lemma:ExactSequence1} holds. 
\end{proof}

Suppose that there is a short splitting sequence of finite sets
 \begin{equation}\label{eq:exact-seq}
 \xymatrix{
O_{(X,Y), \triangle\langle n-i_2\rangle}\ar[r]^<<<<{\iota}&O_{(X,Y), \triangle\langle n-i_1\rangle}\ar[r]^{\pi}&O_{X*Y,n}}
\end{equation}
with $i_1$ and $i_2$ non-negative integers and $n\in\mathbb{N}$, then
\begin{equation}\label{eq:product-cauchy}
\begin{aligned}
\sum_{n=2} \somega[n]{\mu(Y,W)}x^n=&x^{ i_1}\sum_{n=2} \somega[\triangle{\chain[n- i_1]}]{(Y,W)}x^{n- i_1}-\\
&-x^{ i_2}\sum_{n=2} \somega[\triangle{\chain[n- i_2]}]{(Y,W)}x^{n- i_2}\\
=&x^{ i_1}\sum_{j=1}\somega[j]{Y}x^j \sum_{k=1}\somega[k] {W}x^k\\
 &- x^{ i_2} \sum_{j=1}\somega[j]{Y}x^j \sum_{k=1}\somega[k]{W}x^k.
\end{aligned}
\end{equation}

\begin{proposition}\label{prop:operadmap}
There is a  homomorphism $SP\to \{\text{order series}\}$ of algebras over the operad $\spop$. It maps a poset $P$ to the series $\zetaf[P]$ and satisfies
 \begin{equation}
\zetaf[Q*W]=\zetaf[Q]*\zetaf[W],  \label{eqn:concat}\end{equation}
\begin{equation}
\zetaf[Q\sqcup W]=\zetaf[Q]\sqcup \zetaf[W].\label{eqn:Hadamard1}
\end{equation}
\end{proposition}
\begin{proof}
Equation~\eqref{eqn:Hadamard1} follows by counting elements in $\hom_{\poset}(P\sqcup Q,n)$.  
Equation \eqref{eqn:concat} follows by Equation~\ref{eq:product-cauchy} with $i_2=1, i_1=0$. 
\end{proof}

Both relations are well-known at the level of order polynomials, (see for example \cite{algo}).

The following statement allows to make explicit computations.
\begin{proposition}\label{prop:closed1}
For $0\leq m, k :$ 
\begin{equation*}
\zetaf[k]\sqcup \zetaf[m]=\frac{x^k}{k!}\frac{d^k}{dx^k}\zetaf[m]=\sum_{n=0}^k{m+n \choose k}{k \choose n}\zetaf[m+n]. 
\end{equation*}
\end{proposition}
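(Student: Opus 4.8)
The plan is to prove the two displayed equalities separately, working throughout at the level of formal power series via the expansion $\zetaf[m]=\sum_{n\geq 0}\binom{n}{m}x^n$ recorded in Equation~\eqref{eqn:base1}, with the standard convention $\binom{n}{m}=0$ for $n<m$.

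For the first equality, I would observe that the operator $\frac{x^k}{k!}\frac{d^k}{dx^k}$ acts on a single monomial by
\[
\frac{x^k}{k!}\frac{d^k}{dx^k}x^n=\frac{1}{k!}\frac{n!}{(n-k)!}x^n=\binom{n}{k}x^n,
\]
which also covers the case $n<k$, since then both sides vanish. By linearity of differentiation on formal power series, applying this operator to $\zetaf[m]=\sum_n\binom{n}{m}x^n$ multiplies the coefficient of $x^n$ by $\binom{n}{k}$, yielding $\sum_n\binom{n}{k}\binom{n}{m}x^n$. Since $\zetaf[k]=\sum_n\binom{n}{k}x^n$, this sum is by definition the Hadamard product $\zetaf[k]\odot\zetaf[m]$ (compare Equation~\eqref{eqn:principal}), establishing the first equality.

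For the second equality, I would compare coefficients of $x^j$ on both sides. By the computation above the left-hand side has coefficient $\binom{j}{k}\binom{j}{m}$, while $\sum_{n=0}^k\binom{m+n}{k}\binom{k}{n}\zetaf[m+n]$ has coefficient $\sum_{n=0}^k\binom{m+n}{k}\binom{k}{n}\binom{j}{m+n}$. Thus the claim reduces to the binomial identity
\begin{equation*}
\binom{j}{k}\binom{j}{m}=\sum_{n=0}^k\binom{k}{n}\binom{m+n}{k}\binom{j}{m+n}.
\end{equation*}
To prove it, I would apply the subset-of-a-subset identity $\binom{j}{m+n}\binom{m+n}{k}=\binom{j}{k}\binom{j-k}{m+n-k}$ to each summand and factor out the common $\binom{j}{k}$; after dividing by $\binom{j}{k}$ (the case $\binom{j}{k}=0$, i.e. $j<k$, forces every term to vanish and is handled separately) it remains to show $\binom{j}{m}=\sum_{n=0}^k\binom{k}{n}\binom{j-k}{m-k+n}$. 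Substituting $n\mapsto k-n$ rewrites the right-hand side as $\sum_{n=0}^k\binom{k}{n}\binom{j-k}{m-n}$, which is exactly Vandermonde's identity and equals $\binom{j}{m}$.

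The computations are routine; the only points requiring care are the vanishing conventions for out-of-range binomial coefficients, so that the finite sums and the monomial derivative formula agree for small $j$, and the bookkeeping of the edge case $j<k$. I expect the crux to be spotting the subset-of-a-subset factorization, which is what collapses the triple product into a single Vandermonde sum rather than an unwieldy direct evaluation.
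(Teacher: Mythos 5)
Your proof is correct. The first equality is established exactly as in the paper: both you and the authors observe that $\frac{x^k}{k!}\frac{d^k}{dx^k}$ acts on monomials as multiplication of the coefficient of $x^n$ by $\binom{n}{k}$, which identifies the operator with Hadamard product against $\zetaf[k]$. For the second equality, however, you take a genuinely different route. The paper works analytically: it writes $\zetaf[m]=x^m(1-x)^{-m-1}$ and applies the Leibniz rule for the $k$-th derivative of a product, which produces the sum $\sum_{n=0}^k\binom{m+n}{k}\binom{k}{n}\zetaf[m+n]$ directly, term by term. You instead compare coefficients of $x^j$ on both sides and reduce the claim to the binomial identity
\[
\binom{j}{k}\binom{j}{m}=\sum_{n=0}^k\binom{k}{n}\binom{m+n}{k}\binom{j}{m+n},
\]
which you prove by trinomial revision (subset-of-a-subset) followed by Vandermonde; your handling of the degenerate case $j<k$ and of out-of-range binomial coefficients is sound. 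The trade-off: the paper's Leibniz computation is \emph{constructive} --- it derives the right-hand side without knowing it in advance --- whereas your argument only verifies a formula already in hand; on the other hand, yours stays entirely within formal power series and elementary counting identities, avoiding calculus on rational functions, and it exposes the proposition as a disguised Vandermonde identity, which resonates nicely with the paper's own remark that Proposition~\ref{Proposition:pc} specializes to Vandermonde at $t=0$.
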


\begin{proof}
First note that 
\begin{equation}\label{eqn:principal}
\zetaf[m]=\sum_{n}^\infty{n\choose m}x^n
=\sum_{n}^\infty \frac{x^m}{m!}\frac{d^m}{dx^m}x^n.
\end{equation}
From \eqref{eqn:principal} we obtain  
\begin{align*}
\zetaf[k]\sqcup\zetaf[m]
&=\sum_n^\infty {n\choose k}{n\choose m}x^n\\
&=\sum_n^\infty\frac{x^k}{k!}\frac{d^k}{dx^k} {n\choose m}x^n\\
&=\frac{x^k}{k!}\frac{d^k}{dx^k}\zetaf[m],
\end{align*}
and therefore
\begin{align*}
\zetaf[k]\sqcup \zetaf[m]
&=\frac{x^k}{k!}\frac{d^k}{dx^k}\zetaf[m]\nonumber\\
&=\frac{x^k}{k!}\frac{d^k}{dx^k}\frac{x^m}{(1-x)^{m+1}}\nonumber\\
&=\frac{x^k}{k!}\sum_{n=0}^k{k\choose n}\left(\frac{d^{k-n}}{dx^{k-n}}x^m\right)\left(\frac{d^{n}}{dx^{n}}(1-x)^{-m-1}\right) \nonumber\\
&=\sum_{n=0}^k{k\choose n}\frac{m!}{(m+n-k)!k!}\frac{(m+n)!}{m!}x^{m+n}(1-x)^{-m-n-1} \nonumber\\
&=\sum_{n=0}^k{m+n \choose k}{k \choose n}\zetaf[m+n]. 
\end{align*}
\end{proof}
Another proof of Proposition~\ref{prop:closed1}, can be deduced from \cite[(6.44)]{lcomb}.

\begin{corollary}
The strict order series of any series-parallel poset is a linear combination of order series of chains.\label{Cor:lc}
\end{corollary}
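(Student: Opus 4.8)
The plan is to prove Corollary~\ref{Cor:lc} by structural induction on the series-parallel decomposition of the poset. The key observation is that series-parallel posets are, by definition, generated from the single-element poset $\chain[1]$ by finitely many applications of the two binary operations $\mu$ (concatenation) and $\sqcup$ (disjoint union). Thus it suffices to show that the set of posets whose order series lies in the semiring $\mathbb{N}\left[\left\{\zetaf[j]\right\}_j\right]$ (respectively $\mathbb{N}\left[\left\{\zetafp[j]\right\}_j\right]$) contains the generator $\chain[1]$ and is closed under both $\mu$ and $\sqcup$.

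First I would establish the base case: the one-element poset $\chain[1]$ has order series $\zetaf[1]=\frac{x}{(1-x)^2}$, which is trivially a single chain, so it lies in the relevant semiring. For the inductive step I would handle the two operations separately. For concatenation, Proposition~\ref{prop:prod1} gives $\zetaf[\mu(Y,W)]=\zetaf[Y]*\zetaf[W]$, and by the inductive hypothesis $\zetaf[Y]=\sum_i a_i\zetaf[i]$ and $\zetaf[W]=\sum_j b_j\zetaf[j]$ are linear combinations of chains. Since $*$ distributes over addition and Equation~\eqref{eqn:mu} shows $\zetaf[n]*\zetaf[m]=\zetaf[n+m]$, the product $\zetaf[Y]*\zetaf[W]=\sum_{i,j}a_ib_j\zetaf[i+j]$ is again a nonnegative linear combination of chains. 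For disjoint union, Equation~\eqref{eqn:Hadamard1} gives $\zetaf[Y\sqcup W]=\zetaf[Y]\odot\zetaf[W]$, and bilinearity of the Hadamard product reduces this to computing $\zetaf[k]\odot\zetaf[m]$ for chains; Proposition~\ref{prop:closed1} shows precisely that this is $\sum_{n=0}^k\binom{m+n}{k}\binom{k}{n}\zetaf[m+n]$, again a nonnegative linear combination of chains.

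The non-strict case runs in exact parallel: the base case is $\zetafp[1]=\frac{x}{(1-x)^2}$, closure under $\mu$ uses Proposition~\ref{prop:mu+} together with the identity $\zetafp[n]*^+\zetafp[m]=\zetafp[n+m]$, and closure under $\sqcup$ uses the second half of Equation~\eqref{eqn:Hadamard1} together with Proposition~\ref{prop:wclosed1}, which asserts that $\zetafp[k]\odot\zetafp[m]$ is a linear combination of chains.

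There is essentially no hard obstacle here, since the corollary is a direct consequence of the two closure properties (closure of $\ast$ and $\odot$ on chains) that the preceding propositions have already established; the only point requiring care is bookkeeping the induction over an arbitrary series-parallel decomposition rather than a single composition, which is handled cleanly by invoking the recursive definition of series-parallel posets and the fact that the relevant semiring of power series is closed under both the Cauchy-type product $\ast$ (resp.\ $\ast^+$) and the Hadamard product $\odot$.
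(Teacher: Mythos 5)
Your proof is correct and follows essentially the same route as the paper: the paper's (much terser) proof likewise observes that order series of series-parallel posets are generated from $\zetaf[1]=\frac{x}{(1-x)^2}$ under $*$ (resp.\ $*^+$) and $\odot$, and that these operations are linear and send chains to linear combinations of chains, which is exactly the closure argument you spell out via Propositions~\ref{prop:prod1}, \ref{prop:mu+}, \ref{prop:closed1} and \ref{prop:wclosed1}. Your version simply makes the structural induction and the case analysis explicit.
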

\begin{proof} Series-parallel posets are generated by the singleton poset under the operations $*$ and $\sqcup$. Therefore their order series are generated by the chain $\zetaf[1]=\frac{x}{(1-x)^2}$ under the operations $*$ and $\sqcup$. These operations are linear on power series and send chains into linear combinations of chains. 
\end{proof}

\begin{example}\label{example:1}
 Let $D$ be the unary operation on posets defined by
\begin{equation*}
D(P)=\chain[1]*\big(\chain[1]\sqcup P\big)*\chain[1].
\end{equation*}
If the input of $D$ is an $n$-chain, we compute:
\begin{eqnarray*}
\zetaf[D({\chain[n]})]&=&\zetaf[{1}]*(\zetaf[{\chain[1]\sqcup \chain[n]}])*\zetaf[{1}]
\\
&=&\frac{x}{(1-x)^2}*(n\frac{x^n}{(1-x)^{n+1}}+(n+1)\frac{x^{n+1}}{(1-x)^{n+2}})*\frac{x}{(1-x)^2}\\
&=&n\frac{x^{n+2}}{(1-x)^{n+3}}+(n+1)\frac{x^{n+3}}{(1-x)^{n+4}}\\
&=&n\zetaf[{n+2}]+(n+1)\zetaf[{n+3}].
\end{eqnarray*}
\end{example}

\begin{remark}
From the explicit description as a finite sum $\zetaf[P]=\sum a_i\zetaf[i]$ the $n$-order polynomial can be extracted as $\somega[n]{P}=\sum a_i{i\choose n}$.
\end{remark}

\begin{definition}
Recall the definition of non-strict order series,
\begin{equation*}
\zetafp[Q]=\sum_{n=1}^\infty \nsomega[n]{Q} x^n.
\end{equation*}
\end{definition}

Define the concatenation by \begin{equation*}
\zetafp[Q]*^+\zetafp[W]=\zetafp[Q]\frac{1-x}{x}\zetafp[W].
\end{equation*}
and note that the non-strict order series of a disjoint union of posets is still given by the Hadamard product of their respective non-strict order series.

\begin{example}
We have $\zetafp[i]=\frac{x}{(1-x)^{i+1}}=\sum {\multiset{n}{i}}x^n$ where the last equality follows from Equation~\eqref{eqn:other1}.
 Now consider the poset $D(\chain[n])$ for the operation $D$ defined in Example \ref{example:1} above. Then
 \begin{align*}
  \zetafp[{D(\chain[n])}]= & \zetafp[1] *^+ \left( \zetafp[n] \sqcup \zetafp[1] \right) *^+ \zetafp[1]   \\
  = &  \frac{x}{(1-x)^2}\frac{d\zetafp[n]}{dx}
  =  \frac{nx^2+x}{(1-x)^{n+4}},
 \end{align*}
 which can be calculated by hand, but also follows from the result in Example \ref{example:1} by Theorem \ref{thm:reci} below.
\end{example}

\begin{proposition}\label{prop:mu+} Non-strict order series with $*^+,\sqcup$ are an algebra over the operad $\spop$, and there is a $\spop$ homomorphism $SP\rightarrow \{\text{non strict order series}\}$, that is,
\begin{equation}
\zetafp[Q*W]=\zetafp[Q]*^+\zetafp[W],\label{hom+a}
\end{equation}
\begin{equation}
\zetafp[Q\sqcup W]=\zetafp[Q]\sqcup\zetafp[W].\label{hom+u}
\end{equation}
\end{proposition}
\begin{proof}

 Given a pair $(i,j)$, if $n=i+j$, then there is a function \begin{equation*}
\hom_{\poset+}((X,Y),(\chain[i],\chain[j]))\to \hom_{\poset+}({X*Y, \chain[n-1]})
\end{equation*}
that takes the pair of order preserving maps $h:X\mapsto\chain[i]$ and $k:Y\mapsto\chain[j]$ and defines  $F:X*Y\to\chain[n-1]$ by $F(v)=h(v)$ if $v\in X$ and $F(v)=k(v)+i-1$ if $v\in Y$.
The sequence 
\begin{equation*}\label{eq:exact-seq-2}
\xymatrix{ \hom_{\poset+}((X,Y),\triangle\chain[n-1])\to  \hom_{\poset+}((X,Y),\triangle\chain[n])\to \hom_{\poset+}({X*Y, \chain[n-1]})}
\end{equation*}
satisfies Lemma~\ref{Lemma:ExactSequence1}. The relation \eqref{hom+a} then follows by Equation~\eqref{eq:product-cauchy} with $i_2=0,i_1=-1$. Equation~\eqref{hom+u} follows by the same arguments as in the $\poset$ case.
\end{proof}

At the level of posets, we have a faithful functor between the categories $\poset\rightarrow \poset+$ that fixes the objects and for each $P,Q$ injects $\hom_{\poset}(P,Q)$ into $\hom_{\poset+}(P,Q)$. At the level of order series, there is an operadic isomorphism between strict and non-strict order series.   

Let $i$ denote the map $x\mapsto x^{-1}$ and define a map $\iota$, the reciprocity morphism, as follows. If $f$ is the analytic continuation of a strict or non-strict order series of a poset $P$, then 
\begin{equation*}\iota(f):= (-1)^{|P|+1} \big(f \circ i\big). \end{equation*}

\begin{theorem}\label{thm:reci}
For any series-parallel poset $P$ we have 
\begin{equation*}
\iota(\zetaf)=\zetafp \quad\text{and}\quad \iota^2 = \mathrm{id}.
\end{equation*}
In other words, $\zetaf$ is the expansion of its analytic continuation at 0, while $\zetafp$ is the expansion at $\infty$ of the \emph{same} function (up to a sign). 
\end{theorem}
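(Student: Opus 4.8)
The plan is to prove the functional identity $\zetaf\circ i=(-1)^{|X|+1}\zetafp$ (which is the same as $\iota(\zetaf)=\zetafp$) by induction on the series-parallel construction of $X$, using that $\zetaf[\cdot]$ and $\zetafp[\cdot]$ are algebra homomorphisms for the operations $*,\,*^{+},\,\odot$ (Propositions~\ref{prop:prod1},~\ref{prop:mu+} and Equation~\eqref{eqn:Hadamard1}). The base case is a single point; more generally one checks every chain directly, since substituting $x\mapsto x^{-1}$ in $\zetaf[m]=\frac{x^m}{(1-x)^{m+1}}$ and using $(x-1)^{m+1}=(-1)^{m+1}(1-x)^{m+1}$ yields $\zetaf[m]\circ i=(-1)^{m+1}\frac{x}{(1-x)^{m+1}}=(-1)^{m+1}\zetafp[m]$, which is precisely the claim for $X=\chain[m]$ since $|\chain[m]|=m$.

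For the concatenation step $X=\mu(Y,W)$ I would work directly with the analytic continuations. Precomposing $\zetaf[\mu(Y,W)]=\zetaf[Y]\,(1-x)\,\zetaf[W]$ with $i$ and using $1-x^{-1}=-\frac{1-x}{x}$ gives $\zetaf[\mu(Y,W)]\circ i=(\zetaf[Y]\circ i)\big(-\tfrac{1-x}{x}\big)(\zetaf[W]\circ i)$. Inserting the inductive hypotheses $\zetaf[Y]\circ i=(-1)^{|Y|+1}\zetafp[Y]$ and $\zetaf[W]\circ i=(-1)^{|W|+1}\zetafp[W]$, the three signs $(-1)^{|Y|+1}$, $(-1)^{|W|+1}$ and the extra $-1$ collapse to $(-1)^{|Y|+|W|+1}=(-1)^{|X|+1}$, while the remaining factor is exactly $\zetafp[Y]\,\frac{1-x}{x}\,\zetafp[W]=\zetafp[\mu(Y,W)]$. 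This step is routine sign bookkeeping.

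The disjoint union step $X=Y\sqcup W$ is the main obstacle, because $\odot$ is coefficient-wise and does not interact with $x\mapsto x^{-1}$ naively: $(\zetaf[Y]\odot\zetaf[W])\circ i$ is the expansion at infinity of a genuinely new rational function, not a Hadamard product of the two continued factors. The remedy is to pass to coefficients, where $\odot$ becomes the product of order polynomials $c(n)=\somega[n]{Y}\somega[n]{W}$, and to invoke the elementary reciprocity of rational generating functions: for any polynomial $c$ the expansion at infinity of $\sum_{n\geq1}c(n)x^n$ equals $-\sum_{m\geq1}c(-m)x^{-m}$ (this underlies Remark~\ref{rem:moebius} and is \emph{not} Stanley's theorem). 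Hence $\zetaf[Y\sqcup W]\circ i=-\sum_{m\geq1}\somega[-m]{Y}\somega[-m]{W}\,x^{m}$; feeding in the inductive hypotheses in their polynomial form $\somega[-m]{Y}=(-1)^{|Y|}\nsomega[m]{Y}$ and $\somega[-m]{W}=(-1)^{|W|}\nsomega[m]{W}$ (the translation of the functional statement through that same elementary fact), the two signs multiply to $(-1)^{|Y|+|W|}$ and one obtains $(-1)^{|X|+1}\sum_{m}\nsomega[m]{Y}\nsomega[m]{W}x^{m}=(-1)^{|X|+1}\zetafp[Y\sqcup W]$, as required.

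Finally, $\iota^2=\mathrm{id}$ is immediate and independent of the reciprocity itself: on the order series of a fixed poset $X$ the map $\iota$ is $(-1)^{|X|+1}$ times precomposition with $i$, and since $i\circ i=\mathrm{id}$ the sign squares to $1$, so $\iota^2(f)=f$. I expect the only genuinely delicate point to be stating the expansion-at-infinity lemma cleanly and making explicit the two-way dictionary between the functional identity $\zetaf\circ i=(-1)^{|X|+1}\zetafp$ and the polynomial identity $\somega[-m]{X}=(-1)^{|X|}\nsomega[m]{X}$, since this dictionary is exactly the bridge to Stanley's reciprocity that the paper wants to recover afterwards rather than assume.
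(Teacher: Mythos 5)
Your proof is correct, and it reaches the theorem by a genuinely different route at the only step where the two arguments could differ. The paper, like you, reduces everything to showing that $\iota$ intertwines the generating operations, and its concatenation step is exactly your sign bookkeeping with $1-x^{-1}=-\frac{1-x}{x}$. The divergence is at the disjoint union. The paper stays on the analytic side: it writes the Hadamard product with a chain as a differential operator, $\zetaf[n]\odot f=\frac{x^n}{n!}\frac{d^n}{dx^n}f$ (Proposition~\ref{prop:closed1}), and proves a calculus lemma (Lemma~\ref{lemma:derivation}, via Fa\`a di Bruno's formula and an identity for incomplete Bell polynomials) describing how such operators interact with $x\mapsto x^{-1}$; commutation of $\iota$ with $\odot$ is then a computation that never leaves the power-series algebra. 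You instead descend to coefficients, where $\odot$ becomes pointwise multiplication of order polynomials, and invoke the elementary expansion-at-$0$ versus expansion-at-$\infty$ reciprocity for rational functions with polynomial coefficient sequences. Your route is non-circular, as you say: that elementary fact is not Stanley's theorem, and the polynomiality it needs is supplied by Corollary~\ref{Cor:lc} independently of Stanley's results. What the paper's route buys is a proof intrinsic to the order-series algebra --- no evaluation of order polynomials at negative integers --- which is precisely the conceptual point the authors advertise, plus Lemma~\ref{lemma:derivation} as a reusable tool. What your route buys is economy (no Fa\`a di Bruno, no Bell polynomials), and it builds the dictionary $\somega[-m]{X}=(-1)^{|X|}\nsomega[m]{X}$ into the induction itself, so the paper's subsequent proposition that Theorem~\ref{thm:reci} is equivalent to Stanley reciprocity becomes nearly immediate. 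One caveat to repair in a final write-up: your expansion-at-infinity lemma, as stated, omits a constant. The clean statement is that $\sum_{n\geq 0}c(n)x^n$ and $-\sum_{m\geq 1}c(-m)x^{-m}$ are expansions of the \emph{same} rational function, so your version starting the first sum at $n=1$ is off by $c(0)$ in general. This is harmless here, because every coefficient polynomial you apply it to vanishes at $n=0$: by Corollary~\ref{Cor:lc} the strict order polynomial of a series-parallel poset is a combination of $\binom{n}{i}$ with $i\geq 1$, and products of such polynomials vanish at $0$ as well --- but that hypothesis should be stated explicitly.
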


\begin{proof} By definition it follows that $\iota^2=\mathrm{id}$. We will show that $\iota$ commutes with concatenation and disjoint union of chains, that is, it is a morphism between two $\spop$-algebras. Since these operations generate every series-parallel poset, this proves the theorem. 

Since $(1-x^{-1}) =-\frac{(1-x)}{x}$, we see that $\iota$ commutes with concatenation, 
\begin{eqnarray*}
\iota (\zetaf[n] * \zetaf[m])&=&\iota( \frac{x^n}{(1-x)^{n+1}} (1-x) \frac{x^m}{(1-x)^{m+1}}) \\ &=&(-1)^{n+m+1}(-1)^{n+1} \frac{x}{(1-x)^{n+1}} (-1)\frac{(1-x)}{x} (-1)^{m+1}\frac{x}{(1-x)^{m+1}} \\
&=& \iota \zetaf[n] *^+ \iota \zetaf[m].
\end{eqnarray*}

We claim 
  \begin{equation*} \iota\zetafp[n] \sqcup \iota\zetafp[m]=
 \iota \big(\zetafp[n]\sqcup \zetafp[m]\big).
 \end{equation*}
Since $\iota^2=\mathrm{id}$ this readily also implies  
\begin{equation*} \iota\big(\zetaf[n] \sqcup \zetaf[m]\big) =
 \iota^2 \left( \zetafp[n]\sqcup \zetafp[m] \right)=\iota\zetaf[n]\sqcup \iota\zetaf[m].
 \end{equation*}

Abbreviate $\zetafp[m]$ by $f$. Using Lemma \ref{lemma:derivation} below we calculate 
\begin{align*}
\iota\zetafp[n] \sqcup \iota(f) &  =\zetaf[n]\sqcup \iota(f) =  (-1)^{m+1}\frac{x^n}{n!}d_n(f \circ i) \\
& =  (-1)^{m+n+1}\sum_{k=1}^n  \binom{n-1}{k-1}\frac{x^{-k}}{k!}  d_kf(\inv{x}) =  \iota \left( \zetaf[n] \sqcup f \right) .
\end{align*}
\end{proof}

\begin{lemma}
Let $f$ be $n$-times differentiable and $x\neq 0$. Then  
\begin{equation*}
\frac{x^n}{n!}\frac{d^n}{dx^n} (f \circ i)
=(-1)^n \sum_{k=1}^n \binom{n-1}{k-1} \frac{x^{-k}}{k!}  \frac{d^{k}f}{dx^k}(\inv{x}).
\end{equation*}\label{lemma:derivation}
\end{lemma}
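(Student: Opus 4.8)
The plan is to prove the equivalent statement obtained by multiplying through by $n!/x^n$, namely
\[
\frac{d^n}{dx^n}(f\circ i) = (-1)^n \sum_{k=1}^n \frac{n!}{k!}\binom{n-1}{k-1}\, x^{-n-k}\, f^{(k)}(\inv{x}),
\]
where $f^{(k)}$ abbreviates the $k$-th derivative of $f$. The coefficients $\frac{n!}{k!}\binom{n-1}{k-1}$ are the unsigned Lah numbers, and the identity is a special instance of Faà di Bruno's formula for composition with $i\colon x\mapsto\inv{x}$; I would prove it directly by induction on $n$ to avoid invoking that machinery. The base case $n=1$ is just the chain rule, $\frac{d}{dx}f(\inv{x}) = -x^{-2}f'(\inv{x})$, which matches the right-hand side since $\binom{0}{0}=1$.

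For the inductive step I would differentiate the order-$n$ expression term by term. The product and chain rules give, for each summand, exactly two contributions,
\[
\frac{d}{dx}\Big(x^{-n-k}f^{(k)}(\inv{x})\Big) = -(n+k)\,x^{-n-k-1}f^{(k)}(\inv{x}) - x^{-n-k-2}f^{(k+1)}(\inv{x}),
\]
the first from differentiating the power $x^{-n-k}$ and the second from the inner derivative $-x^{-2}$. Collecting by powers of $x$, the coefficient of $x^{-(n+1)-j}f^{(j)}(\inv{x})$ in $\frac{d^{n+1}}{dx^{n+1}}(f\circ i)$ receives the term $k=j$ from the first contribution and the term $k=j-1$ from the second. (The boundary cases $j=1$ and $j=n+1$ require no separate treatment, since the offending binomial coefficients $\binom{n-1}{-1}$ and $\binom{n-1}{n}$ vanish.)

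The crux is the purely combinatorial verification that these two contributions assemble into the order-$(n+1)$ coefficient, i.e.\ that
\[
(n+j)\binom{n-1}{j-1} + j\binom{n-1}{j-2} = (n+1)\binom{n}{j-1}.
\]
This is where the argument has genuine content, but it is elementary: applying Pascal's rule $\binom{n}{j-1}=\binom{n-1}{j-1}+\binom{n-1}{j-2}$ to the right-hand side and subtracting, the difference reduces to $(j-1)\binom{n-1}{j-1}+(j-n-1)\binom{n-1}{j-2}$, which collapses to $0$ after extracting the common factor $\frac{(n-1)!}{(j-2)!\,(n-j+1)!}$. This closes the induction, and multiplying back by $x^n/n!$ recovers the stated form. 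The only real obstacle is the index bookkeeping in the two shifted sums; the underlying recurrence is nothing more than Pascal's identity in its Lah-number guise.
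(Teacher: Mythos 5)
Your proof is correct, but it takes a genuinely different route from the paper's. The paper's proof invokes Fa\`a di Bruno's formula to write $\frac{d^n}{dx^n}(f\circ i)=\sum_{k=1}^n f^{(k)}(\inv{x})\,B_{nk}(d_1i,\ldots,d_{n-k+1}i)$, then evaluates the incomplete Bell polynomials at the derivatives of $i\colon x\mapsto \inv{x}$ by extracting the sign $(-1)^n$ and the power $x^{-n-k}$, reducing everything to $B_{nk}(1!,2!,\ldots,(n-k+1)!)$, and finally cites the known closed form $B_{nk}(1!,2!,\ldots,(n-k+1)!)=\binom{n-1}{k-1}\frac{n!}{k!}$ (the unsigned Lah numbers). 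You bypass that machinery entirely and establish the same Lah-number expansion by induction on $n$: the base case is the chain rule, the inductive step is a termwise application of the product and chain rules, and the crux is the recurrence $(n+j)\binom{n-1}{j-1}+j\binom{n-1}{j-2}=(n+1)\binom{n}{j-1}$, which you verify correctly via Pascal's rule and which is precisely the Lah recurrence $L(n+1,j)=(n+j)L(n,j)+L(n,j-1)$ in binomial form; your observation that the boundary cases $j=1$ and $j=n+1$ are absorbed by vanishing binomial coefficients is also right. What each approach buys: the paper's argument is shorter on the page but outsources its real combinatorial content to Fa\`a di Bruno and a Bell-polynomial identity quoted from the literature, whereas yours is fully self-contained, using nothing beyond first-year calculus and Pascal's identity, at the cost of the index bookkeeping in the two shifted sums.
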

\begin{proof}

Let us abbreviate $\frac{d^n}{dx^n}$ by $d_n$. Applying the \emph{Fa\`{a} di Bruno formula} for the chain rule of higher derivatives we get
\begin{equation*}
d_n(f \circ i)=\sum_{k=1}^n d_kf(\inv{x}) B_{nk}(d_1i,d_2i, \ldots, d_{n-k+1}i).
\end{equation*}
Here the \emph{(incomplete) Bell polynomials} $B_{nk}$ are defined by
\begin{equation*} 
B_{nk}(x_1,\ldots, x_{n-k+1})= \sum \frac{n!}{j_1!j_2!\cdots j_{n-k+1}!}
\left(\frac{x_1}{1!}\right)^{j_1}\cdots\left(\frac{x_{n-k+1}}{(n-k+1)!}\right)^{j_{n-k+1}},
\end{equation*}
where the sum is taken over all sequences $j_1,\ldots , j_{n-k+1}$ of non-negative integers such that 
\begin{equation*} j_1 + j_2 + \cdots + j_{n-k+1} = k, \quad 
j_1 + 2 j_2 + 3 j_3 + \cdots + (n-k+1)j_{n-k+1} = n.
\end{equation*}
Now we compute $B_{nk}(d_1i, d_2i,\ldots, d_{n-k+1}i)$ to be given by
\begin{align*}
   & \sum  \frac{n!}{j_1!j_2!\cdots j_{n-k+1}!}
\left(\frac{-x^{-2}}{1!}\right)^{j_1}\cdots\left(\frac{(-1)^{n-k+1}(n-k+1)!x^{-n+k-2}}{(n-k+1)!}\right)^{j_{n-k+1}} \\
= & \sum  \frac{n!}{j_1!j_2!\cdots j_{n-k+1}!}
(-1)^{\sum_{i=1}^{n-k+1} i j_i} x^{-\sum_{i=1}^{n-k+1} i j_i} x^{-\sum_{i=1}^{n-k+1}  j_i} \\
= & \sum  \frac{n!}{j_1!j_2!\cdots j_{n-k+1}!}
(-1)^{n} x^{-n} x^{-k} \\
= &(-1)^{n} x^{-n} x^{-k} B_{nk} (1!, \ldots, (n-k+1)!) \\
= & (-1)^{n} x^{-n} x^{-k}\binom{n-1}{k-1}\frac{n!}{k!},
\end{align*}
where the last identity is a well-known property of the incomplete Bell polynomials \cite{comtet}.
\end{proof}

Our reciprocity theorem is equivalent to Stanley's version \cite{beginning}.
\begin{proposition}\label{prop:equiv}
For series-parallel posets Theorem \ref{thm:reci} is equivalent to the reciprocity theorem of Stanley.
\end{proposition}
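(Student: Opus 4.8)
The plan is to recognize both statements as two readings of a single coefficientwise identity: one reads it off the expansion at $\infty$ of a rational function, the other off the underlying polynomial evaluated at negative integers. The bridge between them is the classical reciprocity for rational generating functions of polynomials, which I would isolate as a preliminary lemma: if $P$ is a polynomial with $P(0)=0$ and $F(x)=\sum_{n\geq 1}P(n)x^n$ denotes the corresponding rational function, then
\[
F(x^{-1})=-\sum_{n\geq 1}P(-n)\,x^n \qquad \text{as power series in } x.
\]
Since every order polynomial of a nonempty poset vanishes at $0$, this applies with $P=\somega[\cdot]{X}$ and with $P=\nsomega[\cdot]{X}$.

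First I would prove this lemma. As $f\mapsto f\circ i$ is additive and every such $P$ is a $\mathbb{Z}$-combination of the binomials $\binom{n}{k}$ with $k\geq 1$, it suffices to check it on that basis, where $F=\zetaf[k]=\frac{x^k}{(1-x)^{k+1}}$. A direct computation gives $\zetaf[k]\circ i=(-1)^{k+1}\frac{x}{(1-x)^{k+1}}=(-1)^{k+1}\zetafp[k]$, and the identity $\binom{-n}{k}=(-1)^k\binom{n+k-1}{k}$ matches this with $-\sum_{n\geq 1}\binom{-n}{k}x^n$. I would emphasize here that one is allowed to use the chain decomposition of Corollary~\ref{Cor:lc} precisely because $f\mapsto f\circ i$ is linear, whereas $\iota$ is not: the global sign $(-1)^{|X|+1}$ must not be confused with the per-chain signs $(-1)^{k+1}$.

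Second, I would substitute the lemma into the definition of $\iota$, obtaining
\[
\iota(\zetaf[X])=(-1)^{|X|+1}\,(\zetaf[X]\circ i)=(-1)^{|X|}\sum_{n\geq 1}\somega[-n]{X}\,x^n.
\]
Comparing coefficients with $\zetafp[X]=\sum_{n\geq 1}\nsomega[n]{X}x^n$ shows that Theorem~\ref{thm:reci}, i.e.\ $\iota(\zetaf[X])=\zetafp[X]$, is equivalent to the equality $\nsomega[n]{X}=(-1)^{|X|}\somega[-n]{X}$ for all $n\geq 1$. Because strict and non-strict order polynomials are genuine polynomials in $n$, this equality at all positive integers is a polynomial identity; substituting $n\mapsto -n$ turns it into $\somega[n]{X}=(-1)^{|X|}\nsomega[-n]{X}$, i.e.\ the strict order polynomial at $n$ equals $(-1)^{|X|}$ times the non-strict one at $-n$, which is Stanley's reciprocity theorem. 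Reading the same chain of equalities backwards gives the converse implication, and $\iota^2=\mathrm{id}$ is immediate from the definition.

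The only genuinely delicate point is the bookkeeping: keeping the two expansions of the single rational function $\zetaf[X]$ (at $0$ and at $\infty$) apart, tracking where each sign originates, and invoking Stanley's polynomiality to pass from "for all positive integers $n$'' to a polynomial identity that may then be evaluated at $-n$. Once the preliminary lemma is in place no new computation is required, and both implications collapse to one coefficient comparison read in the two directions.
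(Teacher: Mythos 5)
Your proof is correct, and its computational core coincides with the paper's: both arguments work in the same binomial/chain basis (your basis $\binom{n}{k}$, $k\geq 1$, of polynomials vanishing at $0$ is precisely the chain decomposition $\zetaf[X]=\sum_i a_i\zetaf[i]$ of Corollary~\ref{Cor:lc}), and both come down to the single identity $\zetaf[k]\circ i=(-1)^{k+1}\zetafp[k]$, equivalently $\binom{-n}{k}=(-1)^k\binom{n+k-1}{k}$. The difference is in the logical packaging, and it is a genuine one. The paper assumes Stanley's theorem, computes $\zetafp[X]=\iota(\zetaf[X])$ term by term using multiset coefficients, and disposes of the converse with a one-line ``compare coefficients''; you instead isolate the classical lemma that the expansion at infinity of $F(x)=\sum_{n\geq 1}P(n)x^n$ is $-\sum_{n\geq 1}P(-n)x^n$ for any polynomial $P$ with $P(0)=0$, after which Theorem~\ref{thm:reci} and Stanley's theorem are visibly the same coefficient identity $\nsomega[n]{X}=(-1)^{|X|}\somega[-n]{X}$ read in two directions. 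This buys three things: the two implications are handled symmetrically rather than one explicitly and one tersely; the appeal to polynomiality --- needed to upgrade agreement at all positive integers to a polynomial identity that can then be evaluated at $-n$ --- is made explicit, where the paper leaves it implicit in the converse direction; and the lemma itself is independent of series-parallelness, which enters only to guarantee that Theorem~\ref{thm:reci} is available for $X$. Your warning that linearity belongs to $f\mapsto f\circ i$ and not to $\iota$ is exactly the cautionary point the paper makes in the example following Theorem~\ref{thm:reci}, and it is the one place a blind reader could otherwise slip.
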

\begin{proof}
 See the proof of \cite[Lemma 3.15.11]{enumerative}.
\end{proof}

\begin{remark}\label{remark:reciprocity:coef}
By reciprocity we have the following relation among the coefficients of order series  \begin{equation*}
\zetafp[{P}]=\sum_{i=j_0}^{n} (-1)^{n-i}a_i\zetafp[i] \quad \Longleftrightarrow \quad \zetaf[{P}]=\sum_{i=j_0}^{n} a_i\zetaf[i].
\end{equation*}
\end{remark}

\begin{remark}\label{rem:moebius}
The singularities of $\zetaf$ and $\zetafp$ at 1 are simple poles. It follows that the Moebius transformation $x\mapsto \frac{1}{1-x}$, sending 1 to $\infty$, turns the analytic continuations of order series into polynomials. For example, for chains we obtain
\begin{equation*}
\zetaf[n]|_{y=\frac{1}{1-x}}= \sum_{k=0}^n \binom{n}{k} (-1)^k y^{n-k+1}, \quad \zetafp[n]|_{y=\frac{1}{1-x}}= y^{n+1}-y^n.
\end{equation*}
\end{remark}

Since every order series of a series-parallel poset is a linear combination of order series of chains, there are new algebraic relations between order series that are not present on the level of posets. For example, the order series of the poset $\chain[s]\sqcup\chain[n]$ becomes a linear combination of order series of chains while the poset is not a chain.  The following lemma describes the interaction between concatenation and disjoint union of order series and implies a new binomial identity as explained in Proposition~\ref{Proposition:pc}.

\begin{lemma}
Let $s>0$. The following relationship holds for power series $f$ and $g$:
\begin{multline}\label{compatibility}
\zetaf[s]\sqcup (f*g)=\sum_{a+c=s} (\zetaf[a]\sqcup f)*(\zetaf[c]\sqcup g)-\\\left(\sum_{a+c=s-1} (\zetaf[a]\sqcup f)*(\zetaf[c]\sqcup g)\right) *\zetaf[1].
\end{multline}
\end{lemma}
\begin{proof}
We use the notation of ${s\choose a,b,c}=\frac{s!}{a!b!c!}$ for the multinomial coefficient.

\begin{align*}
\zetaf[s]\sqcup (f*g)
&=\frac{x^s}{s!}\frac{d^s}{d x^s}(f(1-x)g)\nonumber\\
&=\frac{x^s}{s!}\sum_{a+b+c=s} {s\choose a,b,c}\frac{d^a}{d x^a} f\frac{d^b}{d x^b} (1-x)\frac{d^c}{d x^c} g \\
&=\frac{x^s}{s!}(\sum_{a+c=s} {s\choose a,0,c}\frac{d^a}{d x^a} f (1-x)\frac{d^c}{d x^c} g\\
&+\sum_{a+c=s-1} {s\choose a,1,c}\frac{d^a}{d x^a} f\frac{d}{d x} (1-x)\frac{d^c}{d x^c} g) \\
&=\sum_{a+c=s} (\zetaf[a]\sqcup f)*(\zetaf[c]\sqcup g)\\&-\frac{x^{s-1}}{s-1!}\sum_{a+c=s-1} {s-1\choose a,c}\frac{d^a}{d x^a} f\frac{d^c}{d x^c} g (1-x)^2\frac{x}{(1-x)^2}\\
&=\sum_{a+c=s} (\zetaf[a]\sqcup f)*(\zetaf[c]\sqcup g)\\&-\left(\sum_{a+c=s-1} (\zetaf[a]\sqcup f)*(\zetaf[c]\sqcup g)\right) *\zetaf[1].
\end{align*}
\end{proof}

\section{Representability of posets}\label{Sec:rep}
In this section we introduce a family of posets in which the coefficients $a_i$ of the order series $\zetaf[P]=\sum a_i\zetaf[i]$ encode topological information. Our results allow to reduce the search space for an algorithm that, given a power series $f(x)$, if possible, finds  a poset $P$ such that $f=\zetaf[P]$.

Consider the unary operation $D$ defined in Example~\ref{example:1},
\begin{equation*}
D(P)=\chain[1]*\big(\chain[1]\sqcup P\big)*\chain[1].
\end{equation*}
This adds a new maximum $x_1$ and a new minimum $x_0$ to $P$, and a third element $y$ with $x_0 < y <x_1$. Geometrically we are attaching a handle (with a point) to the Hasse diagram of $P$. 

Let $\operad$ be the free operad generated by $\mu$ and $D$. We are interested in the $\operad$--algebra generated by the poset with a single element where $\mu$ is concatenation and $D$ is attaching a handle. We call elements of this $\operad$--algebra \emph{Wixárika posets}. For an example see Figure~\ref{fig:melted}, which depicts the Hasse diagram of a Wixárika poset (ordered from left to right). 
 The name is due to the resemblance of the Hasse diagrams of such posets to the intricate colorful necklaces crafted by the Wixárika community in North America.

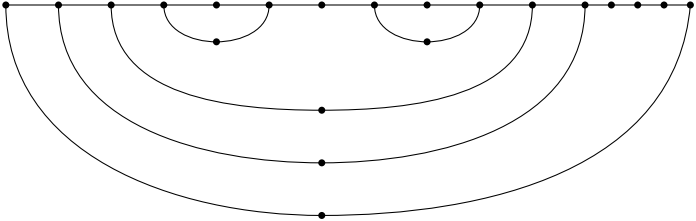
\begin{figure}[htb]
\centering
\begin{tikzpicture}[scale=0.7]
\coordinate  (v1) at (-7,0);  
\coordinate  (v2) at (-6,0);
\coordinate  (v3) at (-5,0);
\coordinate  (v4) at (-4,0);
\coordinate  (v5) at (-3,0);
\coordinate  (v6) at (-2,0);
\coordinate  (v7) at (-1,0);
\coordinate  (v8) at (0,0);
\coordinate  (v9) at (1,0);
\coordinate  (v10) at (2,0);
\coordinate  (v11) at (3,0);
\coordinate  (v12) at (4,0);
\coordinate  (v13) at (4.5,0);
\coordinate  (v14) at (5,0);
\coordinate  (v15) at (5.5,0);
\coordinate  (v16) at (6,0);
\coordinate (w1) at (-3,-.7);
 \coordinate  (w2) at (1,-.7);  
 \coordinate  (x1) at (-1,-2);
  \coordinate  (y1) at (-1,-3);
  \coordinate  (z1) at (-1,-4);
  
      \draw (v1) -- (v16);
\draw (v4) to[out=-90, in=180] (w1); \draw (w1) to[out=0, in=-90] (v6);
\draw (v8) to[out=-90, in=180] (w2); \draw (w2) to[out=0, in=-90] (v10);
\draw (v3) to[out=-90, in=180] (x1); \draw (x1) to[out=0, in=-90] (v11);
\draw (v2) to[out=-90, in=180] (y1); \draw (y1) to[out=0, in=-90] (v12);
\draw (v1) to[out=-90, in=180] (z1); \draw (z1) to[out=0, in=-95] (v16);

\fill[black] (v1) circle (.0666cm);
\fill[black] (v2) circle (.0666cm);
\fill[black] (v3) circle (.0666cm);
\fill[black] (v4) circle (.0666cm);
\fill[black] (v5) circle (.0666cm);
\fill[black] (v6) circle (.0666cm);
\fill[black] (v7) circle (.0666cm);
\fill[black] (v8) circle (.0666cm);
\fill[black] (v9) circle (.0666cm);
\fill[black] (v10) circle (.0666cm);
\fill[black] (v11) circle (.0666cm);
\fill[black] (v12) circle (.0666cm);
\fill[black] (v13) circle (.0666cm);
\fill[black] (v14) circle (.0666cm);
\fill[black] (v15) circle (.0666cm);
\fill[black] (v16) circle (.0666cm);
\fill[black] (w1) circle (.0666cm);
\fill[black] (w2) circle (.0666cm);
\fill[black] (x1) circle (.0666cm);
\fill[black] (y1) circle (.0666cm);
\fill[black] (z1) circle (.0666cm);
\end{tikzpicture}
\caption{Hasse diagram of a Wixarika poset. 
}\label{fig:melted}
\end{figure}

We can describe elements in $\operad$ by words in the letters $\mu$ and $D$. Let $w$ be such an element. By abuse of notation, we write $w(\chain[1])$ to represent the substitution of the poset $\chain[1]$ on all leaves of the tree describing the word $w$ see Figure~\ref{fig:tree}.

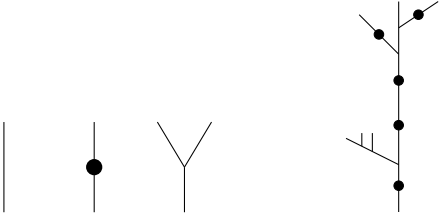
\begin{figure}[htb]
\centering{
\begin{tikzpicture}[scale=.6]
\coordinate  (v1) at (-5,1);  
\coordinate  (v2) at (-5,-1);
      \draw (v1) -- (v2);
      
\coordinate  (w1) at (-3,1);
\coordinate  (w2) at (-3,-1);
\coordinate  (w3) at (-3,0);   
\draw (w1) -- (w2);
\fill[black] (w3) circle (.18cm);

\coordinate  (x1) at (-1.6,1);
\coordinate  (x2) at (-.4,1);
\coordinate  (x3) at (-1,0);
\coordinate  (x4) at (-1,-1);
     \draw (x1) -- (x3);
      \draw (x2) -- (x3);
      \draw (x3) -- (x4);
\end{tikzpicture}
\hspace{1.5cm}
 \begin{tikzpicture}[scale=.35]
 \draw (0,0) -- (0,8);
 \filldraw[black] (0,1) circle (.18cm);
  \filldraw[black] (0,3.3) circle (.18cm);
   \filldraw[black] (0,5) circle (.18cm);
   
    \draw (0,6) -- (-1.5,7.5);
   \filldraw[black] (-.75,6.75) circle (.18cm);
   
       \draw (0,7) -- (1.5,8);
   \filldraw[black] (.75,7.5) circle (.18cm);
   
      \draw (0,1.8) -- (-2,2.8);
      \draw (-1.4,2.5) -- (-1.4,3);
      \draw (-1,2.3) -- (-1,3);
\end{tikzpicture}}
\caption{Left: The identity operation, the operation $D$ and the operation $\mu$. Right: A tree representing the element of $\operad$ in Figure~\ref{fig:melted}.}\label{fig:tree}
\end{figure}

In \cite{algo}, an algorithm is described to compute the order polynomials. The same algorithm can be adapted to our setting to compute order series, and in fact, from the point of view of operads, the algorithm amounts to evaluate the operations on the tree from top to bottom.  Similarly, we write $w(\zetaf[1])$  
to represent the substitution of the corresponding elements in power series. 
For example, the tree of Figure~\ref{fig:tree} corresponds to $w=D(\_*(\_*(\_*D(D(D(\_)*(\_*D(\_)))))))$. Thus, following Example~\ref{example:1}, we compute
$w(\zetaf[1])=882\zetaf[16]+7995\zetaf[17]+27232\zetaf[18]+43792\zetaf[19]+33552\zetaf[20]+9880\zetaf[21]$.

We will say that a power series $f(x)$ is \emph{represented} if there is a series-parallel poset $P$ such that $f=\zetaf[P]$. In order to determine which power series can be represented, we shall determine topological invariants of a poset $P$ from its associated power series $\zetaf[P]$, such as the Betti number of $P$ (which is short for \emph{the first Betti number of the Hasse diagram of $P$}). With that information we describe an algorithm which determines if $f(x)$ can be represented and if so, it explicitly constructs all possible series-parallel posets $P$ so that $f=\zetaf[P]$. Posets that have the same order series are called \emph{Doppelg\"{a}nger} posets.

\begin{proposition} \label{prop:rep}
Consider $w\in \operad$,  a word in the characters $*$ and $D$. Let  $f(x)=w(\zetaf[1])$ and $P=w(\chain[1])$.

\begin{enumerate}
\item The series $f(x)$ admits a unique description 
\begin{equation*}
f(x)= a_i\zetaf[i]+a_{i+1}\zetaf[i+1]+\cdots+a_{k}\zetaf[k],
\end{equation*}
where the coefficients $a_j$ are non-negative integers. 
\item The number $i$ is the number of elements in a maximal chain in $P$.\label{prop:point}
\item The number $k$ is the number of elements in $P$.
\item The difference $d=k-i$ is the Betti number of $P$, equal to the number of times the character $D$ occurs in $w$.
\item The difference $m=i-2d-1$ is the number of times the character $*$ occurs in $w$.
\item The number $m+1$ is the number of leaves in the tree $w$.
\item\label{item:cond} $\sum_{u=1}^{k}(-1)^{k-u}a_u=1$.
\item The first coefficient $a_i$ admits a factorization  $a_i=\prod_{j=1}^d t_j$ with the following property. For any $t_j\geq 1$ there are $r_{t_j},s_{t_j},$ and $u_{t_j}$ with $t_j<u_{t_j}$, so that when we evaluate the word $w$ on $\zetaf[1]$ then  $r_{t_j}\zetaf[t_j]+\cdots+s_{t_j}\zetaf[u_{t_j}]$ is an input of some $D$.
\end{enumerate}
\end{proposition}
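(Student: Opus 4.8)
The plan is to reduce the whole proposition to two ``local rules'' describing how the chain-decomposition of $w(\zetaf[1])$ changes under the two generators, and then to run a single induction on the tree $w$, matching the resulting numbers against the poset invariants of $X=w(\chain[1])$. First I would record the rules. Writing $w_1(\zetaf[1])=\sum_u a^{(1)}_u\zetaf[u]$ and $w_2(\zetaf[1])=\sum_v a^{(2)}_v\zetaf[v]$, the identity $\zetaf[a]*\zetaf[b]=\zetaf[a+b]$ of \eqref{eqn:mu} gives for $w=\mu(w_1,w_2)$ that $w(\zetaf[1])=\sum_{u,v}a^{(1)}_u a^{(2)}_v\,\zetaf[u+v]$, whereas \eqref{eqn:d} and distributivity (Remark~\ref{rmk:addition-mu-d}) give $D\big(\sum_u a_u\zetaf[u]\big)=\sum_u a_u\big(u\,\zetaf[u+2]+(u+1)\zetaf[u+3]\big)$. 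Part~(1) is then immediate: existence and non-negativity are Remark~\ref{rmk:addition-mu-d} (equivalently Corollary~\ref{Cor:lc}), and uniqueness follows from the linear independence of the $\zetaf[j]=x^j+\cdots$, which vanish to pairwise distinct orders at $0$.

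Next I would carry the quadruple $(a_i,i,a_k,k)$ of the extreme nonzero coefficients and their indices through the tree, starting from $(1,1,1,1)$ on each leaf. The two rules show that $\mu$ sends a pair $(r_1,t_1,s_1,u_1),(r_2,t_2,s_2,u_2)$ to $(r_1r_2,\,t_1+t_2,\,s_1s_2,\,u_1+u_2)$ and that $D$ sends $(r,t,s,u)$ to $(rt,\,t+2,\,s(u+1),\,u+3)$; in particular $i$ is additive under $\mu$ and grows by $2$ under $D$, and likewise $k$ is additive and grows by $3$. Parts~(2),(3),(5),(6) then follow by comparing with the matching recursions for $X$: the number of elements in a maximal chain equals the lowest degree of $\zetaf[X]$ (since $\somega[n]{X}=0$ below it and is positive there), hence it equals $i$; the cardinality $|X|$ is read from the top degree $k$, because $\somega[n]{X}=\sum_u a_u\binom{n}{u}$ has degree $|X|$ in $n$; the number of $\mu$'s yields $m=i-2d-1$ (exactly the increment bookkeeping above); and a tree with $m$ binary nodes and arbitrarily many unary nodes has $m+1$ leaves.

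The delicate point is the Betti number in part~(4). I would first prove, by the same induction, that every Wix\'arika poset is connected and has a unique minimal and a unique maximal element; this is the crux, since a priori the number of covering edges created by $\mu$ and $D$ depends on how many extremal elements are present. Granting unique extrema, $\mu(A,B)$ adds exactly one Hasse edge (top of $A$ to bottom of $B$) and $D$ adds three vertices and four edges, so $b_1=E-V+1$ is additive under $\mu$ and satisfies $b_1\mapsto b_1+1$ under $D$; thus the first Betti number equals the number of $D$'s, which equals $d=k-i$.

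Part~(7) I would obtain from reciprocity: by Theorem~\ref{thm:reci}, $\zetafp[X]=\iota(\zetaf[X])=(-1)^{k}\sum_u(-1)^u a_u\frac{x}{(1-x)^{u+1}}$, so comparing the coefficient of $x$ (each summand contributes $1$) with $\nsomega[1]{X}=1$ yields $\sum_u(-1)^{k-u}a_u=1$. Finally, part~(8) is simply the first coordinate of the quadruple: it starts at $1$, multiplies under $\mu$, and is multiplied by the input's lowest index $t$ (the second coordinate) at each $D$, so $a_i=\prod_{j=1}^d t_j$ over the inputs of the $d$ occurrences of $D$ in the tuple evaluation $w(1,1,1,1)$. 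The main obstacle is precisely the topological identification in part~(4); once the unique-extrema fact is in hand, everything else is bookkeeping on the quadruple together with the reciprocity input for part~(7).
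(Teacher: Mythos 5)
Your proposal is correct, and its skeleton is the same as the paper's: both arguments run an induction over the tree $w$ using the two local rules \eqref{eqn:mu} and \eqref{eqn:d}, tracking exactly the quadruple $(a_i,i,a_k,k)$ that the paper encodes as the evaluation $w(1,1,1,1)$ in Algorithm~\ref{alg:test}, with item (1) coming from Remark~\ref{rmk:addition-mu-d} plus linear independence of the $\zetaf[j]$. Where you genuinely diverge is in three places. For item (3) the paper defers to Lemma~\ref{lemma:gen}, which establishes $k=|X|$ inside the algebra via the behaviour of $\odot$ (Proposition~\ref{prop:closed1}) so that it also covers general series-parallel posets; you instead read $|X|$ off as the degree of the order polynomial $\somega[n]{X}=\sum_u a_u\binom{n}{u}$, which is shorter but imports Stanley's degree theorem as an external input. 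For item (4) your treatment is actually more complete than the paper's: the paper only remarks that $D$ ``increases the number of holes,'' whereas you isolate the fact that makes the edge-count well defined, namely that every Wix\'arika poset is connected with a unique minimum and maximum (so $\mu$ adds exactly one Hasse edge and $D$ adds three vertices and four edges); this closes a gap the paper leaves implicit. For item (7) the paper invokes Equation~\eqref{eqn:Ehrhart} together with $h_0^*=1$, while you apply Theorem~\ref{thm:reci} directly and compare the coefficient of $x$ with $\nsomega[1]{X}=1$; both rest on the same reciprocity statement, but your version avoids any appeal to Ehrhart theory and is self-contained at the level of order series. Items (2), (5), (6), (8) are handled the same way in both proofs.
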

\begin{proof}
We shall give a proof of each item.
\begin{enumerate}
\item Follows from Proposition~\ref{prop:closed1} and Equation~\eqref{eq:*}.

\item  If $*$ appears in  $w$, it concatenates subposets of any maximal chain. If $D$ appears in $w$,  it adds a maximum and a minimum point to the maximal chain in the input poset, which is a subposet of any maximal chain. 

We conclude from  Proposition~\ref{prop:closed1} and Equation~\eqref{eq:*} that when we evaluate $w$, the lowest term counts the number of points in a maximal chain.

\item We prove this in Lemma~\ref{lemma:gen} in a more general setting. 
\item
From Proposition~\ref{prop:closed1} it follows that we can track the number of times $D$ acts by comparing the parameter of the monomial with lowest degree and the monomial of highest degree. Geometrically, $D$ modifies the Hasse diagram by increasing the number of holes.
\item If the word $D$ occurs $d$ times in $w$, the only way to obtain the term $\zetaf[i]$ is if the word $*$ occurs $i-2d-1$ times in $w$.

\item Consider the representation of $w$ as a binary tree with some marked edges. If the tree has $m$ binary branches, then it has $m+1$ leaves.
\item Follows from Equation \eqref{eqn:Ehrhart} and the fact that $h_0^*=1$
\item Follows from  Proposition~\ref{prop:closed1}.
\end{enumerate}
    \end{proof}

\subsection{The inverse problem}\label{sec:algo}
In this section we study the following problem. When is a power series $f(x)=\sum_{i=i_0}^{i_j} f_i\zetaf[i], f_{i_0},f_{i_j}\neq 0$,  represented as $\zetaf[P]$ for some poset $P$? Two posets are called Doppelg\"{a}nger posets if they share the same order series, for example 
$\chain[1]*(\chain[1]\sqcup\chain[1]\sqcup\chain[1])$ and $\chain[2]\sqcup \chain[2]$ have the same order series:
$\zetaf[2]+6\zetaf[3]+6\zetaf[4]$ \cite{Doppelgangers}.

Note that if we order the chains in the decomposition $\zetaf[P]=a\zetaf[i]+\cdots + b\zetaf[j],\zetaf[Q]=a_2\zetaf[i_2]+\cdots + b_2\zetaf[j_2]$, the terms $a,i,b, j$ change under the rules:
\begin{equation}
\zetaf[P]*\zetaf[Q]=aa_2\zetaf[ i+i_2]+\cdots+bb_2\zetaf[j+j_2]  \label{eqn:ex1}
\end{equation}
and
\begin{equation}
D(\zetaf[P]) = ia\zetaf[i+2]+\cdots+ (j+1)b\zetaf[ j+3].\label{eqn:ex2}  
\end{equation}

If we have a candidate poset $P$,
to reduce the computational time, we first compute the lowest and highest order term of the factorization $\zetaf[P]=a\zetaf[i]+\cdots + b\zetaf[j],$ and compare it with $f_{i_0}, f_{i_j}$. To select candidate posets, we do not need to explore all possible words $w$. Since $*$ is associative, we can pick $(\_*(\_*\cdots(\_*\_))\cdots~)$ as a representative of every composition of $n$ operations $*$. We call this representative an \emph{$n$-corolla}. In other words, the $2$--corolla is $(\_*\_)$, the $3$--corolla is $(\_*(\_*\_))$, and so on. If a word $w$ is a solution to our problem, then any permutation of the inputs of the corollas will also be a solution to our problem. In our setting, some of those Doppelg\"{a}ngers are explained because the poset concatenation is not commutative, but the power series concatenation is commutative.

The language of operads allows us to choose representatives from equivalence classes of trees. Consider the two-colored symmetric operad 2--$\operad$. This operad has a binary operation $D$, colored in  red, and for every $n>1$ an $n$--ary operation, given by an $n$-corolla $*$, colored in green. Red colored operations have no restrictions, but green ones can only be composed with red ones. In other words, a green corolla cannot be grafted into a green corolla. 

\begin{proposition}\label{thm:alg}
Given a finite sum $f(x)=\sum_{i=i_0}^{i_j} f_i\zetaf[i], f_{i_0},f_{i_j}\neq 0$ where $f_{i_k},{0\leq k\leq j}$ are non-negative integers, there is an algorithm to determine if $f(x)=\zetaf[P]$ for some Wixarika poset $P$ and, in the positive case, the algorithm returns all possible posets representing $f(x)$.
\end{proposition}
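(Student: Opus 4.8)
The plan is to turn the structural data extracted in Proposition~\ref{prop:rep} into a terminating backtracking search over the two-colored operad $2$--$\operad$, using the invariants to prune the search tree so that only finitely many candidate words $w$ need to be examined. First I would read off from $f(x)=\sum a_j\zetaf[j]$ the lowest index $i$ and the highest index $k$ appearing with nonzero coefficient. By items (3), (4), and (5) of Proposition~\ref{prop:rep}, if $f(x)$ is to be represented then $k=|X|$, the Betti number is $d=k-i$, which equals the number of occurrences of $D$, and the number of occurrences of $\mu$ is forced to be $m=i-2d-1$. Thus the total number of operad generators in any admissible word $w$ is pinned down in advance: exactly $d$ copies of $D$ and $m$ copies of $\mu$. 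I would first check the necessary conditions $d\ge 0$, $m\ge 0$, and the normalization $\sum_{u}(-1)^{k-u}a_u=1$ from item (\ref{item:cond}); if any fails we return ``not representable.''

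Next I would describe the search itself. Because $\mu$ is associative and the power-series concatenation $*$ is commutative (Proposition~\ref{prop:prod1} together with Equation~\eqref{eqn:mu}), every class of words built from $\mu$ alone has an $n$-corolla representative, and any permutation of the inputs of a corolla yields the same power series. This is exactly why we pass to $2$--$\operad$: its trees are built from red binary $D$-nodes and green $n$-ary corolla $\mu$-nodes subject to the rule that a green corolla may not be grafted directly into a green corolla. This coloring selects a canonical representative from each equivalence class of trees computing a given series, so the search enumerates each power-series value at most a controlled number of times. With the counts $d$ and $m$ fixed and the number of leaves equal to $m+1$ (item (6)), there are only finitely many such colored trees; I would enumerate them, evaluate each using the operadic morphism $\zetaf[\cdot]$ of Proposition~\ref{theorem:prelim} via the update rules
\[
\mu(\zetaf[X],\zetaf[Y])=aa_2\zetaf[i+i_2]+\cdots+bb_2\zetaf[j+j_2],\qquad D(\zetaf[X])=ia\zetaf[i+2]+\cdots+(j+1)b\zetaf[j+3],
\]
and compare the result with $f(x)$, collecting every tree whose value matches.

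To make the enumeration efficient rather than merely finite, I would prune aggressively using the monomials of extreme degree: at each partially built subtree, the lowest- and highest-degree terms of its value are determined by the lowest/highest terms of its arguments under the two rules above, so any branch whose current lowest term already exceeds $i$ or whose highest term already exceeds $k$ can be discarded immediately. The factorization constraint $a_i=\prod_{j=1}^{d}t_j$ from item (8) further restricts which multiplicities may appear at the $D$-inputs, cutting the branching factor. Finally I would argue termination and correctness: termination because the generator counts bound the tree size, soundness because every tree we return evaluates to $f(x)$ by construction, and completeness because every series-parallel poset arises as $w(\chain[1])$ for some word $w$ (the generation statement underlying $W_{Po}$), and by the associativity/commutativity reduction every such $w$ is equivalent, as far as its power series is concerned, to one of the colored canonical forms we enumerate. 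I expect the main obstacle to be the completeness half of the correctness argument: one must verify carefully that passing to $2$--$\operad$ representatives loses no power-series value, i.e.\ that the coloring rule genuinely captures all the identifications forced by associativity of $\mu$ and commutativity of $*$, so that no representable series slips through the canonicalization unexamined.
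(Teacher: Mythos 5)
Your proposal is correct and follows essentially the same route as the paper's proof: both read off the invariants of Proposition~\ref{prop:rep} to fix the number of occurrences of $D$ and $\mu$ (and the candidate $|X|$), pass to $2$--$\operad$ colored trees to quotient out the associativity of $\mu$ and commutativity of $*$, prune candidates by their lowest- and highest-degree terms together with the divisibility constraint on $a_i$ (this is exactly the paper's Algorithm~\ref{alg:test}), and then fully evaluate the survivors top-to-bottom and compare with $f(x)$. The one substantive difference is the final step: the paper resolves both your completeness worry and the requirement to return \emph{all} posets (not merely canonical representatives) by invoking the recognition results of \cite{survey} to generate the remaining Doppelg\"{a}ngers once a matching word is found, a step your proposal leaves open since not every Doppelg\"{a}nger arises from permuting corolla inputs.
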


\begin{proof}
Let $f(x)=\sum_{i=i_0}^{i_j} f_i\zetaf[i], f_{i_0},f_{i_j}\neq 0$. We first restrict the space of posets that can represent $f$. From Proposition~\ref{prop:rep}, 
we know the number of times the characters $*,D$ occurs in the candidate words as well as the possible value of $|P|$. 

Now, we verify that $f$ satisfy the restrictions from Ehrhart theory listed on \cite{class}.

The next step is to choose representatives of Doppelg\"{a}ngers. We restrict the number of candidate words by using the 2--$\operad$ structure under the rule: Any $n$-corolla counts as $n-1$ uses of $*$. 

After that, in a loop:
\begin{itemize}
    \item We evaluate a candidate word $\{w\}$ only on the terms $(f_{i_0},i_0,f_{i_j},i_j)$ of $f$ using Algorithm~\ref{alg:test}.
\item For $w$ that passed the algorithm we compute $w(\zetaf[1])$. This can be done evaluating from top to bottom as in \cite{algo}.
\item If any poset passes this step, we use \cite{survey} to find the remaining Doppelg\"{a}nger posets and we stop the algorithm. Otherwise we continue the loop until there are no more posets candidates.
\end{itemize}

\begin{algorithm}[H]
\SetAlgoLined
\KwIn{ A candidate 2-$\operad$ tree with $i$ leaves and $d$ red  marks. A value $(a_i,i, a_k,k)$ \;}
\tcp{The four numbers stand for $a_i\zetaf[i]+\cdots + a_k\zetaf[k], (1,1,1,1)=\zetaf[1]$}\tcp{ The $n$-corollas represent the composition of $n$ copies of $*$, and the red marks represent $D$}
Put $(1,1,1,1)$ on the leaves; 

Let $B$ be the set of red marks ordered by inverse Deep First Search on the tree\;
 \For{$b\in B$}{compute the input of $b$ with Equation~\eqref{eqn:ex1} and Equation~\eqref{eqn:ex2}. Lets say $(r_b, t_b,s_b,u_b)$\;\tcp{only compute the lowest and highest term}
\uIf{$t_b$ does not divide $a_i$}{
\Return False\;} \Else{store $t_b$ in Cache; \tcp{ Use Cache to find the root's value} }
 }
\uIf{
 the root has a value different to $a_i$
}{\Return False}
\Else{True.}
\caption{Is  $w(1,1,1,1) ==(a_i,i, a_k,k)$? }\label{alg:test}
\end{algorithm}
\end{proof}

The algorithm has as input a description of $f(x)$ as a finite combination of elements $\zetaf[i]$ for some natural numbers $i$. Note that every coefficient of $\sum_{i=1}^\infty \zetaf[i]=x+3x^2+\ldots$ is finite. Because of this, we require the input to be written already as a finite sum of terms $\{x^i/(1-x)^{i+1}\}$. We do not have an algorithm to discover the factorization of an arbitrary power series $f$ that is guaranteed to finish in a finite time.

For arbitrary series-parallel posets we need to adapt Proposition \ref{prop:rep} and increase the search space as now we are required to work with different connected components and attached handles (as in the operation $D$) can consist of several elements.

\begin{proposition}\label{lemma:gen}
Let $Q$ be a series-parallel poset. If $\zetaf[Q]=\sum_{j=i}^{k} a_j\zetaf[j]$ is a finite sum, then 
\begin{enumerate}
\item  the Betti number of $Q$ is smaller or equal to $k-i$,
\item the number $i$ is the length of any maximal chain of the largest connected component of the Hasse diagram of $Q$,
\item the number $k$ is the total number of points in the poset,
\item $a_i$ and $a_k$ can be factored into a product of terms obtained by the application of disjoint union on the subposets of $Q$,
\item $\sum_{u=1}^{k}(-1)^{k-u}a_u=1$.
\end{enumerate}
\end{proposition}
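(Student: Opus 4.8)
The plan is to induct on the series--parallel decomposition of $Y$, using the two generating operations and their power-series avatars. Concatenation $\mu$ corresponds to the product $*$ (Proposition~\ref{prop:prod1}), which by Equation~\eqref{eqn:mu} satisfies $\zetaf[p]*\zetaf[q]=\zetaf[p+q]$, and disjoint union $\sqcup$ corresponds to the Hadamard product $\odot$, which by Proposition~\ref{prop:closed1} sends $\zetaf[p]\odot\zetaf[q]$ to a non-negative combination whose top term is $\binom{p+q}{p}\zetaf[p+q]$ and whose bottom term is $\zetaf[\max(p,q)]$. Since every coefficient that occurs is non-negative (Remark~\ref{rmk:addition-mu-d}), there is never any cancellation, so I can track the extreme terms $a_i\zetaf[i]$ and $a_k\zetaf[k]$ exactly: under $*$ both the bottom index $i$ and the top index $k$ add and the extreme coefficients multiply; under $\odot$ the top index adds (the top coefficient picking up a binomial factor) while the bottom index becomes $\max$ of the two.

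Items (2), (3) and (4) follow from this bookkeeping. For (3) the top index is additive under both operations and equals $1$ for the one-point poset, so $k=|Y|$. For (2) I first observe that $\mu(A,B)$ is always connected (its Hasse diagram contains the complete bipartite graph joining the maximal elements of $A$ to the minimal elements of $B$), so the connected components of $Y$ arise solely from $\sqcup$; writing $Y=\bigsqcup_t C_t$ gives $\zetaf[Y]=\bigodot_t\zetaf[C_t]$, whence $i=\max_t \ell(C_t)$, where $\ell(C_t)$ is the length of a longest chain of $C_t$. Since longest-chain length adds under $\mu$ and maximizes under $\sqcup$, it matches the bottom index, which is the length of a longest chain, realised in the component achieving the maximum. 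For (4) the top coefficient $a_k$ is obtained by multiplying the top coefficients along each $*$ in the decomposition and inserting one binomial factor at each $\odot$; reading off these factors exhibits $a_k$ (and symmetrically $a_i$) as the asserted product of terms coming from the disjoint-union steps applied to the subposets.

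Item (5) is cleanest to derive from reciprocity rather than by induction. By Corollary~\ref{Cor:lc} we have $\somega[n]{Y}=\sum_u a_u\binom{n}{u}$, so $\somega[-1]{Y}=\sum_u(-1)^u a_u$. There is exactly one non-strict order-preserving map $Y\to\chain[1]$, hence $\nsomega[1]{Y}=1$; combining this with Stanley reciprocity (equivalently Theorem~\ref{thm:reci}), $\nsomega[1]{Y}=(-1)^{|Y|}\somega[-1]{Y}$, and using $|Y|=k$ yields $\sum_u(-1)^{k-u}a_u=1$. The same identity also drops out of Equation~\eqref{eqn:Ehrhart} together with $h^*_0=1$.

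The main obstacle is item (1). The bound behaves well under $\sqcup$: since $b_1(A\sqcup B)=b_1(A)+b_1(B)$ while $i$ maximizes rather than adds, one gets $(k-i)(A\sqcup B)\ge (k_A-i_A)+(k_B-i_B)$, so the inductive inequality is preserved with room to spare. The delicate step is $\mu$: passing to $\mu(A,B)$ adjoins to the Hasse diagram the complete bipartite graph between the maximal elements of $A$ and the minimal elements of $B$, creating $|\max A|\,|\min B|-(c_A+c_B-1)$ new independent cycles (where $c_A,c_B$ count components), whereas $k-i$ grows only by $(k_A-i_A)+(k_B-i_B)$. Controlling these new cycles against that gain is the heart of the argument, and it is precisely here that I expect the work to concentrate: one must show the cycles produced by the bipartite join are not independent of the chain-length data. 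This step requires genuine care, since for a series join of two antichains the new-cycle count grows like the product $|\max A|\,|\min B|$ while the gain in $k-i$ grows only linearly, so the inequality is tight (and sensitive) exactly in the complete-bipartite regime.
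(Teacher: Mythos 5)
Your handling of items (2)--(5) is correct and is in substance the paper's own argument: the paper also reads off (2)--(4) by tracking the extreme terms of the chain decomposition through Proposition~\ref{prop:closed1} and Equation~\eqref{eqn:mu}, and it proves (5) by citing Equation~\eqref{eqn:Ehrhart} together with $h^*_0=1$, which is the Ehrhart-theoretic phrasing of the reciprocity computation you give.

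The gap is item (1), and it is a genuine one: you correctly isolated the hard step --- under $\mu$ the bipartite join creates on the order of $|\max A|\,|\min B|$ independent cycles while $k-i$ only grows additively --- but then deferred it (``this is where I expect the work to concentrate''), so as a proof the proposal is incomplete. Moreover, no amount of care can close this step, because the inequality fails exactly in the regime you flagged. Take $Y=\mu(A,B)$ where $A=\chain[1]\sqcup\chain[1]\sqcup\chain[1]$ and $B=\chain[1]\sqcup\chain[1]\sqcup\chain[1]\sqcup\chain[1]$ are antichains. The Hasse diagram of $Y$ is the complete bipartite graph $K_{3,4}$, whose first Betti number is $12-7+1=6$. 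On the series side, $\zetaf[A]=\zetaf[1]+6\zetaf[2]+6\zetaf[3]$ and $\zetaf[B]=\zetaf[1]+14\zetaf[2]+36\zetaf[3]+24\zetaf[4]$ (the chain expansions of $\sum_n n^3x^n$ and $\sum_n n^4x^n$); since $\zetaf[p]*\zetaf[q]=\zetaf[p+q]$ and no cancellation occurs among the non-negative coefficients, $\zetaf[Y]$ runs from $i=2$ to $k=7$, so $k-i=5<6$. In general, for a $\mu$-join of antichains of sizes $m$ and $n$ the deficit is $b_1-(k-i)=(m-2)(n-2)-1$: the bound is tight at $K_{3,3}$, as you observed, and violated for all $m,n\geq 3$ with $(m,n)\neq(3,3)$. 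Note that the paper's own proof of (1) has the same hole you identified --- it only analyzes $D$ and $\odot$ (disjoint union), asserting that $k-i$ bounds the Betti number ``of each connected component,'' and never confronts the cycles created by the $\mu$-join; item (1) is genuinely true only for the Wix\'arika posets of Proposition~\ref{prop:rep}, where every handle is attached by $D$ and contributes exactly one unit to both the Betti number and $k-i$. So your instinct about where the difficulty lies was exactly right; the honest conclusion is that this difficulty refutes item (1) as stated rather than being surmountable.
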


\begin{proof}{\ }
\begin{enumerate}
\item  Geometrically, when we apply $D$ to a disjoint union of posets, we increase the Betti number of the Hasse diagram. But according to Proposition~\eqref{prop:closed1}, the index $k$ reflects not only the number of handles in the Hasse diagram but also the number of points on those handles and the number of connected components on the Hasse diagram. If \begin{equation*}
\sum_{j_1=i_1}^{k_1} a_{j_1}\zetaf[j_1]\sqcup \sum_{j_2=i_2}^{k_2} a_{j_2}\zetaf[j_2]=\sum_{j_3=i_3}^{k_3} a_{j_3}\zetaf[j_3],
\end{equation*}
then we can verify that $k_3-i_3\geq (k_1-i_1)+(k_2-i_2)$.
Here $(k_1-i_1)$ and $(k_2-i_2)$ are bounds for the Betti numbers of each connected component, and $(k_3-i_3)$ bounds the Betti number of the union. Hence the inequality.
    
\item  We use the following observation: If $m\geq k$, then, by the definition of $\sqcup$ (see also Proposition~\ref{prop:closed1}), we find that
\begin{equation*}
\zetaf[k]\sqcup\zetaf[m]={m\choose k}{k\choose 0}\zetaf[m]+\cdots.    
\end{equation*}
The proof is similar to the proof of item~\eqref{prop:point} of Proposition~\ref{prop:rep}.
\item  If $m>n$, then by Proposition~\ref{prop:closed1} the last non-zero coefficient of $\chain[m]\bigsqcup \chain[n]$ is ${m+n\choose n}{n\choose n}\zetaf[m+n]$. Concatenation also preserves the number of elements. These results imply that the term $\zetaf[k]$ is obtained by adding all elements involved in all disjoint unions as well as those elements involved in concatenation.
\item It follows from Proposition~\ref{prop:closed1}.
\item Follows from Equation \eqref{eqn:Ehrhart} and the fact that $h_0^*=1$.
\end{enumerate}
\end{proof}

The coefficients of the power series associated to a series-parallel poset are not topological invariants. For example the posets $\chain[1]*(\chain[1]\sqcup \chain[1])*\chain[1]$ and $\chain[2]*(\chain[1]\sqcup \chain[1])$ have the same number of labeling maps, but different Betti numbers.
  
To adapt Algorithm~\ref{alg:test} for series-parallel posets we need to increase the search space to allow for disjoint union of posets. The algorithm depends on \cite{survey} which already works at the level of series-parallel posets.  

\subsection{Comparison with Ehrhart theory}
\label{subsec:EhrhartTheory}
Let $Poly(P)$ denote the order polytope of a poset $P$ and $E_{Poly(P)}$ its Ehrhart polynomial~\cite{computingd}. Recall that by
 \begin{equation*} \label{eqn:Ehrhart}    \zetafp[P] = \sum_{n=1}^\infty \Omega_{P,n} x^{n}  
 = x+\sum_{n=2}^\infty E_{Poly(P)}(n-1) x^n
     =x \frac{h^*(x)}{(1-x)^{|P|+1}}
 \end{equation*}
order series are related to the Ehrhart series of $Poly(P)$, or its $h^*$-vector, by a change of basis and a shift of degree. 
From the point of view of Ehrhart theory there are several works characterizing $h^*$-vectors \cite{coef,class,monotonicity} and $f^*$-vectors \cite{fvect} of polytopes. We instead study Ehrhart series of order polytopes. This is the reason why we work with respect to a different basis. 

\begin{remark}\label{rem:from_poly_to_order}
Note that 
\begin{equation*}
Poly(P\sqcup Q)=Poly(P)\times Poly(Q), \quad  Poly(P*Q)=Poly(P)\oplus Poly(Q),
\end{equation*}
where $\oplus$ denotes the \textit{free sum} of polytopes: 
If $X \subset \mathbb{R}^d$ and $Y \subset \mathbb{R}^e$ are two polytopes of dimension $d$ and $e$, then $ X \oplus Y =  \mathrm{conv}\{ (0_{\mathbb{R}^d}\times Y) \cup (X \times 0_{\mathbb{R}^e} ) \} \subset \mathbb{R}^{d+e}$. 

This and the relation $\Omega_{P,n} = E_{Poly(P)}(n-1)$ imply the form of the respective operations on order series, Equation~\eqref{hom+a} and Equation~\eqref{hom+u}. See for instance \cite{refl}.
\end{remark} 

We have shown above how to express $\zetafp[P]$ in the basis $\{\zetafp[i]\}_{i\in \mathbb{N}}$.  In this section we relate the coordinates with respect to this basis to a certain triangulation of $Poly(P)$.

Following Stanley \cite{tpp}, a \emph{lower set (order ideal)} $I$ is a subset of the poset $P$ so that if $x\in I$ and $y\leq x$ then $y\in I$. Lower sets form a poset $J(P)$ ordered by inclusion.
To any chain of strict inclusions of lower sets $K:I_1\subset I_2\subset\cdots\subset I_k$ we assign \begin{equation*}F_K= \left\{
f\in \mathbb{R}^{|P|}:\begin{array}{l} f  \hbox{ is constant on the subsets } I_1, I_2\setminus I_1,\cdots I_k\setminus I_{k-1}, P\setminus I_k,\\ 0 =f(I_1)\leq f(I_2)\leq\cdots\leq f(I_k)\leq f(P\setminus I_k)=1.
\end{array}  \right\}.
\end{equation*}
The \emph{canonical triangulation} of the order polytope of $P$ is given by the simplicial complex 
\begin{equation*}
\{F_K\mid K \in J(P)\}.
\end{equation*}
In other words, it is the order complex of the poset $J(P)$ (see \cite{tpp}).

For an example consider the poset $\chain[2]\sqcup\chain[3]$. Its order polytope is 
\begin{equation*}
Poly(\chain[2]\sqcup\chain[3])=\{ x_1 \leq x_2 , y_1\leq y_2 \leq y_3\}\subset [0,1]^5.
\end{equation*}
Using coordinates $(x_1, x_2, y_1, y_2, y_3)$ in $\mathbb{R}^5$. It contains various 5-dimensional simplices, for example $\{0\leq x_1\leq x_2\leq y_1\leq y_2\leq y_3\leq 1\}$, or $\{0\leq x_1\leq y_1\leq x_2\leq y_2\leq y_3\leq 1\}$. This fact can be expressed using a shuffle product (see e.g.\ \cite{shuffle}) on the order polytopes of two chains $\chain[n]$ and $\chain[m]$. We define it as the sum of simplices that are formed by linear orders on $x_1,x_2,\ldots, x_n,y_1,y_2,\ldots,y_m$ that preserve the original order on the $x$- and the $y$-variables. Note that this is just the usual shuffle product -- we simply use words to encode linear orders, for example $xy$ means $x\leq y$, and vice versa. It is known that the shuffle product of two words on $n$ and $m$ letters has ${n+m\choose n}$ summands. This implies that the number of maximal simplices in the canonical triangulation of $Poly(\chain[n]\sqcup\chain[m])$ is ${n+m\choose n}$. In the example $\chain[2]\sqcup\chain[3]$ we find thus 10 simplices of dimension 5 in the canonical triangulation of $Poly(\chain[2]\sqcup\chain[3])$. 
 
The set of simplices $\{F_K\}_{ K \in J(P)}$ is  closed under taking intersections. In particular, every simplex not in the boundary of $Poly(P)$ lies in the intersection of some maximal simplices of the canonical triangulation. Suppose $P_1,\ldots,P_k$ are the maximal simplices of $\{F_K\}_{ K \in J(P)}$. Let ${\mathbb{1}}_{X}\colon \mathbb{R}^d \to \mathbb{R}$ be the indicator function of polytope $X\subset \mathbb{R}^d$, defined by ${\mathbb{1}}_{X}(x)=1$ if $x\in X$ and $0$ otherwise.  We can determine all points on the canonical triangulation of $Poly(P)$ by considering the union of simplices of maximal dimension, but then we are overcounting. For instance, we count twice the faces obtained as intersection of two simplices. We can express this fact as
\begin{equation*}
{\mathbb{1}}_{Poly(P)}=\sum_i {\mathbb{1}}_{P_i}-\sum_{i,j, i\neq j} {\mathbb{1}}_{P_i\cap P_j}+\cdots+(-1)^k{\mathbb{1}}_{P_1\cap\cdots\cap P_k},
\end{equation*}
which is an instance of the well-known inclusion-exclusion principle.

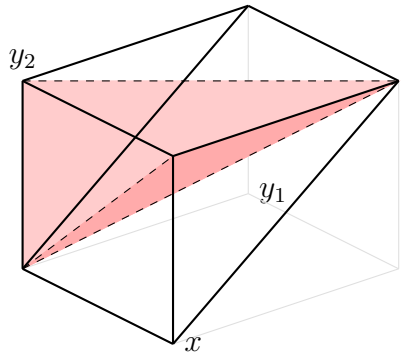
\begin{figure}[h]
\begin{tikzpicture}[scale=1]
\coordinate (v0) at (0,0);
\coordinate  (x1) at (2,-1);
\coordinate  (y1) at (3,1);
\coordinate  (y2) at (0,2.5);
\coordinate  (xy1) at (5,0);
\coordinate  (xy1y2) at (5,2.5);
\coordinate  (y1y2) at (3,3.5);
\coordinate  (xy2) at (2,1.5); 
\draw[lightgray!50] (v0) to (y1) node[black,right]{$y_1$};
\draw[lightgray!50] (x1) to (xy1);
\draw[lightgray!50] (xy1) to (xy1y2);
\draw[lightgray!50] (y1) to (y1y2);
\draw[lightgray!50] (y1) to (xy1); 
\filldraw[red!20] (v0) -- (xy1y2) -- (y2) -- (v0);
\filldraw[red!33] (v0) -- (xy2) -- (xy1y2) -- (v0);
\draw[thick] (v0) to (x1) node[right]{$x$};
\draw[thick] (v0) to (y2) node[above]{$y_2$};
\draw[thick] (x1) to (xy1y2);
\draw[thick] (v0) to (y1y2);
\draw[thick] (y1y2) to (xy1y2);
\draw[thick] (y2) to (xy2);
\draw[thick] (x1) to (xy2); 
\draw[thick] (xy2) to (xy1y2);
\draw[thick] (y2) to (y1y2);
\draw[dashed] (v0) to (xy1y2); 
\draw[dashed] (y2) to (xy1y2);
\draw[dashed] (v0) to (xy2); 
\end{tikzpicture}
\caption{The order polytope of the poset $\chain[1]\sqcup \chain[2]$. The canonical triangulation consists of three 3-simplices, indicated by the dashed lines. They intersect in two internal 2-simplices, depicted in red.}\label{fig:polyP}
\end{figure}

\begin{lemma}\label{lemma:aux}
Let $m\geq k\geq r$, $P=\chain[k]\sqcup\chain[m]$. Then the number of $m+r$--dimensional simplices in the canonical triangulation of $P$ is ${m+r\choose k}{k\choose r}$.
\end{lemma}
\begin{proof}
 
Consider a $(m+r)$--simplex $T$ in the canonical triangulation of $Poly(P)$. We have already discussed above that in the case $r=k$ the simplex $T$ is represented by a linear order on the coordinates of $\mathbb{R}^{|P|}=\mathbb{R}^k \times \mathbb{R}^m = \text{span}_{\mathbb{R}}(x_1,\ldots,x_k,y_1,\ldots,y_m)$ that respects the individual orders $x_1\leq\ldots\leq x_k$ and $y_1 \leq \ldots \leq y_m$. If $r<k$, then $T$ is obtained from one (or more) of these simplices by exchanging an inequality $``\leq"$ for an equality $``="$. For instance, if $k=m=1$, then the canonical triangulation of $Poly(P)=[0,1]^2$ has two 2-simplices $\{0\leq x \leq y \leq 1\}$ and $\{0\leq y\leq x \leq 1\}$ which intersect in the internal 1-simplex $\{0\leq x=y \leq 1\}$. See also Example \ref{example:cantriang} below. 

The general construction starts with a geometrical $m+r$-simplex $\Delta_{m+r}=\{(z_0,\cdots,z_{m+r-1})|$ $\sum z_i=1, z_i\geq 0\}$. We then change coordinates $w_1=z_0, w_2=z_0+z_1,\cdots, w_{m+r}=\sum z_i  $, to obtain
$\Delta_{m+r}=\{0\leq w_1\leq \cdots\leq w_{m+r}\leq 1\}$. The final step is to embed this simplex into the canonical triangulation by assigning coordinates $x_i, y_j$ to the $w_k$ place holders. 

Choose $k$ different coordinates of $\Delta_{m+r}$; this can be done in ${m+r\choose k}$ ways.
Out of those $k$ coordinates choose $k-r$ coordinates in ${k\choose r}$ ways. Then label the $(m+r)-k+(k-r)=m$ coordinates from $x_1$ to $x_m$. Add a second label $y_1,\cdots y_k$ to the $k$ chosen points. Those $k-r$ chosen points have now two labels $x_i,y_j$ which represent the condition $x_i=y_j$. Any other point has only a single label. The simplex $T$ is then defined by applying these labels to $\Delta_{m+r}$.
We can do this process in ${m+r\choose k}{k\choose r}$ ways, as claimed, and any simplex in the canonical triangulation is determined by such a process.
\end{proof}

\begin{proposition}[$\zetafp$ satisfies the inclusion-exclusion principle]\label{IElemma}
Let $n=|P|$. Consider $\zetafp[{P}]=\sum_{i=j_0}^{n} (-1)^{n-i}a_i\zetafp[i]$. 
Then every $a_i$ is non-negative, the order polytope of ${P}$ is the union of $a_{n}$ copies of $n-$simplices, and the remaining coefficients $a_i$ of $\zetafp[{P}]=\sum (-1)^{n-i}a_i\zetafp[i]$ count the number of internal $i-$faces that occur as intersections of the $n$-simplices.
\label{lemma:a}
\end{proposition}

\begin{proof}
Case $P=\chain[k]* \chain[m]$:
A $k$--chain concatenated with a $m$--chain is a $(m+k)$-chain, so the coefficient of $\zetafp[m+k]$ is 1, in accordance with the fact that $Poly(\chain[m+k])$ is a $(m+k)$--dimensional simplex.

Case $P=\chain[k]\sqcup \chain[m]$: Assume $m\geq k$. Lemma~\ref{lemma:aux} together with Proposition~\ref{prop:closed1} and Remark~\ref{remark:reciprocity:coef}, implies that the inclusion-exclusion principle is valid for this case.

Now, if (the non-strict order series of) a poset $P$ satisfies the inclusion-exclusion formula, so does $P*\chain[1]$ and $P\sqcup \chain[1]$.  The geometric reason is that both operations add new orthogonal directions to a unit cube. The action of $*\chain[1]$ is that of forming a cone while $\sqcup\chain[1]$ acts as the Minkowski sum; the order polytope $Poly(P\sqcup Q)$ is constructed from the order polytopes $Poly(P)$ and $Poly(Q)$ by choosing orthogonal dimensions with coordinates $\{x_i\}$ and $\{y_j\}$, respectively.  On each we draw one order polytope, and then we consider the space $ax+yb$, $x\in Poly(P), y\in Poly(Q)$ with $0\leq a,b\leq 1$.

For any $s$--simplex in $Poly(Q)$ there is a chain $S$ in $J(P)$ so that the $s$--simplex is the order polytope of $S$.  
 
Then $Poly(P\sqcup Q)= \sqcup_{S\in Poly(Q)} Poly(P\sqcup S)$. Since $*,\sqcup$ are distributive with respect to $+$ on the vector space with basis $\{\zetafp[{i}]\}$, assuming $\zetaf[Q]=\sum a_i\zetaf[i]$, we obtain $\zetaf[P\sqcup Q]=\sum a_i\zetaf[P]\sqcup \zetaf[i]$. Since we proved that for chains the inclusion-exclusion formula holds, it is thus also true for series-parallel posets.
\end{proof}

\begin{example}\label{example:cantriang}
Consider $P=\chain[1] \sqcup \chain[2]$. Figure \ref{fig:polyP} depicts the order polytope of $P$. The canonical triangulation has three 3-simplices, described by the linear extensions $
\{x\leq y_1\leq y_2\}$ , $\{y_1\leq x \leq y_2 \}$ and $\{y_1 \leq y_2 \leq x\}$.
If $r=0$, then ${m+r\choose k}{k\choose r}=2$, so there are two internal 2-simplices which arise as intersections of the latter ones. The first two produce $\{x=y_1 \leq y_2 \}$, the last two produce $\{y_1 \leq y_2 = x\}$. Note that there are no internal 1-simplices; every 1-simplex of the canonical triangulation lies in the boundary of $Poly(P)$.
\end{example}
 
It is known that Ehrhart polynomials satisfy the inclusion-exclusion principle since they count lattice points on integer polytopes. In \cite[Proposition~9.5]{hopf} the Hopf monoid structure of posets is studied. It is proved that posets are isomorphic, as Hopf monoids, to pointed conical generalized permutahedra. In this sense, the order polynomial is the \textit{associated polynomial to a character} \cite[Proposition~9.4]{hopf}.
It follows that the order polynomial satisfies an inclusion-exclusion formula for pointed conical generalized permutahedra. In our setting, we consider the action of an operad generated by concatenation and disjoint union. 
In that case Proposition~\ref{IElemma} shows that the inclusion-exclusion principle is preserved by the action of these two operations.

\section{Combinatorics}
\label{sec:comb}
In the first subsection we apply our results to study binomial identities. We could not find the identities in Equation~\eqref{eq:nodiv}, Proposition~\ref{Proposition:pc} and Proposition~\ref{prop:new} in the book \cite{lcomb}. We explore some corollaries of those identities, including an identity for finite partitions of fixed length that are allowed to contain empty sets (Subsection \ref{subsec:partitions}).
 In Section~\ref{sec:prob} we introduce a combinatorial approach to study the multivariate negative hypergeometric distribution. 
 
\subsection{Identities}
The existence of the operad structure on order series implies Proposition~\ref{Proposition:pc}. The associativity of the concatenation on order series implies Proposition~\ref{prop:new}. We call these two identities structural binomial identities. We also list several direct consequences of these structural identitites that we could not find in the literature.

\begin{proposition}\label{Proposition:pc}
Given $p,q,s\leq p+q$. Then for $t\leq s$:

\begin{align*}
 {p+q+t\choose s}{s\choose t}=  \sum_{\substack{a+c=s\\
n\leq a\\
r\leq c\\
n+r=t}}{p+n \choose a}{a \choose n}{q+r \choose c}{c \choose r}-\sum_{\substack{a+c=s-1\\n\leq a\\
r\leq c\\
n+r=t-1}}
{p+n \choose a}{a \choose n}{q+r \choose c}{c \choose r}.
\end{align*}

\end{proposition}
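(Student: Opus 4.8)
The plan is to specialize the compatibility relation \eqref{compatibility} to the two order series $f=\zetaf[p]$ and $g=\zetaf[q]$ and then read off the coefficient of a fixed chain on both sides. First I would compute the left-hand side: since $\zetaf[p]*\zetaf[q]=\zetaf[p+q]$ by \eqref{eqn:mu}, Proposition~\ref{prop:closed1} gives
\[
\zetaf[s]\odot(\zetaf[p]*\zetaf[q])=\zetaf[s]\odot\zetaf[p+q]=\sum_{t=0}^s\binom{p+q+t}{s}\binom{s}{t}\zetaf[p+q+t].
\]
The coefficient of $\zetaf[p+q+t]$ here is exactly the left-hand side of the claimed identity.

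Next I would expand the right-hand side of \eqref{compatibility} using the same two ingredients. For each splitting $a+c=s$, Proposition~\ref{prop:closed1} yields $\zetaf[a]\odot\zetaf[p]=\sum_{n=0}^a\binom{p+n}{a}\binom{a}{n}\zetaf[p+n]$ and likewise $\zetaf[c]\odot\zetaf[q]=\sum_{r=0}^c\binom{q+r}{c}\binom{c}{r}\zetaf[q+r]$; the concatenation product then collapses each pair of chains via $\zetaf[p+n]*\zetaf[q+r]=\zetaf[p+q+n+r]$. Collecting the terms landing in $\zetaf[p+q+t]$ (those with $n+r=t$) produces precisely the first sum of the proposition. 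The second term of \eqref{compatibility}, carrying the extra factor $*\,\zetaf[1]$, behaves identically except that the concatenation with $\zetaf[1]$ shifts the chain index up by one, so the terms contributing to $\zetaf[p+q+t]$ are those with $a+c=s-1$ and $n+r=t-1$; this reproduces the second sum together with its minus sign.

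Finally I would invoke the linear independence of the chains $\{\zetaf[j]\}_j$: since $\zetaf[j]=\frac{x^j}{(1-x)^{j+1}}$ has lowest-degree term $x^j$, distinct chains are linearly independent as power series, so the equality in \eqref{compatibility} forces equality of the coefficients of $\zetaf[p+q+t]$ on the two sides. Comparing those coefficients term by term gives the stated identity.

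The computations are routine given the earlier results; the only points requiring care are the bookkeeping of the summation ranges---checking that the constraints $n\le a$ and $r\le c$ built into Proposition~\ref{prop:closed1} match those written under the sums, and that the hypotheses $t\le s$ and $s\le p+q$ keep the extracted coefficients in the valid (nonvanishing) range---and the explicit verification that the index shift induced by $*\,\zetaf[1]$ turns the condition $n+r=t$ into $n+r=t-1$ while lowering $a+c$ from $s$ to $s-1$. I expect this index-shift accounting to be the main, though mild, obstacle, since it is exactly where the two sums acquire their differing constraints and their relative sign.
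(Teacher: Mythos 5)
Your proposal is correct and follows essentially the same route as the paper's own proof: specialize the compatibility relation \eqref{compatibility} to order series of chains, expand both sides via Proposition~\ref{prop:closed1} and Equation~\eqref{eqn:mu}, track the index shift caused by the extra factor $*\,\zetaf[1]$, and compare coefficients of $\zetaf[p+q+t]$. Your explicit appeal to linear independence of the chains $\{\zetaf[j]\}_j$ to justify the coefficient comparison is a small addition the paper leaves implicit, but the argument is the same.
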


\begin{proof}
Consider Equation \eqref{compatibility} with $f=\zetaf[q], g=\zetaf[p]$. Then 
\begin{align*}
\zetaf[s]\sqcup (\zetaf[q]*\zetaf[p])&=\zetaf[s]\sqcup (\zetaf[q+p])\\
&=\sum_{t=0}^s{p+q+t \choose s}{s \choose t}\zetaf[p+q+t],
\end{align*}
and
\begin{align*}
\zetaf[s]\sqcup (\zetaf[q]*\zetaf[p])=&
\sum_{a+c=s} (\zetaf[a]\sqcup \zetaf[p])*(\zetaf[c]\sqcup \zetaf[q])\\
&-\left(\sum_{a+c=s-1} (\zetaf[a]\sqcup \zetaf[p])*(\zetaf[c]\sqcup \zetaf[q])\right) *\zetaf[1]\\
=&\sum_{a+c=s}\sum_{n=0}^a{p+n \choose a}{a \choose n}\zetaf[p+n]
*\sum_{r=0}^c{q+r \choose c}{c \choose r}\zetaf[q+r]\\
&-\sum_{a+c=s-1}\sum_{n=0}^a{p+n \choose a}{a \choose n}\zetaf[p+n]
*\sum_{r=0}^c{q+r \choose c}{c \choose r}\zetaf[q+r]*\zetaf[1]\\
=&\sum_{a+c=s}\sum_{n=0}^a\sum_{r=0}^c{p+n \choose a}{a \choose n}{q+r \choose c}{c \choose r}\zetaf[p+n+q+r]\\
&-\sum_{a+c=s-1}\sum_{n=0}^a\sum_{r=0}^c
{p+n \choose a}{a \choose n}{q+r \choose c}{c \choose r}\zetaf[p+n+q+r+1].
\end{align*}
 The result follows by comparing the coefficients.
\end{proof}
Note that the previous proposition generalizes the Chu-Vandermonde identity. To see this, evaluate on $t=0$.

\begin{proposition}\label{prop:new}
Let $n$ be a non-negative integer and  $ k\in \mathbb{N}$. For any $k$-partition $n=n_1+\cdots+n_k$, with $n_i\geq 0$, and for $m\geq n+k-1$ we have
\begin{multline*}
{m \choose n}={k-1\choose 0}\sum_{\substack{\sum m_i=m\\ m_i\geq n_i}}\prod_{i=1}^k {q_i\choose n_i} -{k-1\choose 1}\sum_{\substack{\sum m_i=m-1\\ m_i\geq n_i}}\prod_{i=1}^k {q_i\choose n_i}\pm\cdots\\
 +(-1)^{k-2}{k-1\choose k-2}\sum_{\substack{\sum m_i=m-(k-2)\\ m_i\geq n_i}}\prod_{i=1}^k {q_i\choose n_i}+   (-1)^{k-1}{k-1\choose k-1}\sum_{\substack{\sum m_i=m-(k-1)\\ m_i\geq n_i}}\prod_{i=1}^k {q_i\choose n_i}.
\end{multline*}

\end{proposition}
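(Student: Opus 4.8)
The plan is to recognize both sides of the identity as coefficients of $x^m$ in one and the same rational function, namely the $k$-fold concatenation of the chains $\chain[n_1],\dots,\chain[n_k]$. The left-hand side $\binom{m}{n}$ is the $x^m$-coefficient of the single chain series $\zetaf[n]$, while the alternating sum on the right will be the $x^m$-coefficient of the ordinary product of the individual chain series, corrected by the factors of $(1-x)$ produced by the operation $*$.

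First I would unwind the concatenation. Since $f*g=f(1-x)g$, an easy induction on $k$ gives
\[
\zetaf[n_1] * \cdots * \zetaf[n_k] = (1-x)^{k-1}\prod_{i=1}^k \zetaf[n_i],
\]
because each of the $k-1$ concatenations contributes exactly one factor $(1-x)$. On the other hand, $*$ is associative (it is ordinary multiplication with interposed factors $(1-x)$), and Equation~\eqref{eqn:mu} gives $\zetaf[a]*\zetaf[b]=\zetaf[a+b]$, so the left-hand side collapses to $\zetaf[n_1+\cdots+n_k]=\zetaf[n]$. Combining these,
\[
(1-x)^{k-1}\prod_{i=1}^k \zetaf[n_i] = \zetaf[n],
\]
which one may also verify directly by substituting $\zetaf[m]=x^m/(1-x)^{m+1}$ from Equation~\eqref{eqn:base1}: the numerator is $x^n$ and the net power of $(1-x)$ is $(k-1)-\sum_i(n_i+1)=-(n+1)$.

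Second, I would compare $x^m$-coefficients. By Equation~\eqref{eqn:base1} the right-hand side contributes $\binom{m}{n}$. On the left, $\zetaf[n_i]=\sum_{m_i\geq n_i}\binom{m_i}{n_i}x^{m_i}$, so the Cauchy product $\prod_i \zetaf[n_i]$ has $x^{\ell}$-coefficient $\sum_{\sum m_i=\ell,\ m_i\geq n_i}\prod_i\binom{m_i}{n_i}$; multiplying by $(1-x)^{k-1}=\sum_{j=0}^{k-1}(-1)^j\binom{k-1}{j}x^j$ and reading off the coefficient of $x^m$ yields precisely the alternating double sum in the statement.

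The computation is essentially routine, so the points requiring care are purely bookkeeping. The first is that the correction factor is $(1-x)^{k-1}$, one power per concatenation rather than per chain; the second is the boundary case $n_i=0$, where one must read $\zetaf[0]=1/(1-x)$ as a formal power series rather than as the honest order series of the empty poset. The hypothesis $m\geq n+k-1$ is what guarantees that every summand with $0\leq j\leq k-1$ satisfies $m-j\geq n$ and is therefore genuinely present; for smaller $m$ the same identity still holds, the out-of-range terms vanishing automatically since $m_i\geq n_i$ forces $\sum m_i\geq n$.
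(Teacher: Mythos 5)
Your proposal is correct and follows essentially the same route as the paper: decompose $\zetaf[n]$ as the $k$-fold concatenation $\zetaf[n_1]*\cdots*\zetaf[n_k]=(1-x)^{k-1}\prod_{i=1}^k\zetaf[n_i]$ and compare coefficients of $x^m$, expanding $(1-x)^{k-1}$ binomially. Your extra remarks on the $n_i=0$ case and on the role of the hypothesis $m\geq n+k-1$ are sound refinements of bookkeeping the paper leaves implicit.
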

\begin{proof}
Decompose $\chain[n]$ as $\chain [n_1]*\chain[n_2]*\cdots *\chain[n_k]$. Then the result is obtained by applying Equation~\eqref{eqn:concat} and computing the degree $m$ coefficient of the last row in the sequence of identities below:
\begin{align*}
\sum_{q=n}^\infty {q\choose n} x^m &= 
\zetaf[n]\\
&=\zetaf[n_1](1-x)\zetaf[n_2](1-x)\cdots (1-x)\zetaf[n_k]\\
 &=\zetaf[n_1]\zetaf[n_2]\cdots \zetaf[n_k](1-x)^{k-1}\\
&=\sum_{q_1=n_1}^\infty {{q_1}\choose n_1} x^{q_1} \cdots \sum_{q_{k}=n_k}^\infty {{q_{k}}\choose n_k} x^{q_{k}} \sum_{j=0}^{k-1} (-1)^j{k-1\choose j}x^j.
\end{align*}
\end{proof}
Using $\zetafp[n]$ and $*^+$  we obtain the equivalent identity:
\begin{proposition}
\label{eqn:gen}
Let $n=n_1+\cdots+n_k$. 
Then for $v\geq n+k-1$
\begin{multline*}
 \multiset{v-k+1}{n} ={k-1\choose 0}\sum_{\substack{\sum v_i=v\\ v_i\geq 1}}
 \prod_{i=1}^k \multiset{v_i}{n_i} -{k-1\choose 1}\sum_{\substack{\sum v_i=v-1\\ v_i\geq 1}}
 \prod_{i=1}^k \multiset{v_i}{n_i}+\cdots
 \\
 +(-1)^{k-2}{k-1\choose k-2}\sum_{\substack{\sum v_i=v-(k-2)\\ v_i\geq 1}}
 \prod_{i=1}^k \multiset{v_i}{n_i} +   {k-1\choose k-1}\sum_{\substack{\sum v_i=v-(k-1)\\ v_i\geq 1}}
 \prod_{i=1}^k \multiset{v_i}{n_i}.
\end{multline*}
\end{proposition}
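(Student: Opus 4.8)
The plan is to mirror the proof of Proposition~\ref{prop:new}, replacing the strict order series $\zetaf$ and the product $*$ by the non-strict order series $\zetafp$ and the operation $*^+$ of Equation~\eqref{op:+}. First I would decompose the chain $\chain[n]$ as the concatenation $\mu(\chain[n_1],\ldots,\chain[n_k])$, which is legitimate since $\mu$ is associative and $n=n_1+\cdots+n_k$. Applying Proposition~\ref{prop:mu+} repeatedly gives $\zetafp[n]=\zetafp[n_1]*^+\cdots*^+\zetafp[n_k]$, and unfolding the definition $A*^+B=A\frac{1-x}{x}B$ shows that each of the $k-1$ applications contributes exactly one factor $\frac{1-x}{x}$ (associativity of $*^+$ follows from that of ordinary power-series multiplication). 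Hence
\[
\zetafp[n]=\left(\prod_{i=1}^k\zetafp[n_i]\right)\left(\frac{1-x}{x}\right)^{k-1}.
\]

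Next I would substitute the explicit expansions $\zetafp[n_i]=\sum_{v_i\geq 1}\multiset{v_i}{n_i}x^{v_i}$ from Equation~\eqref{eqn:other1} together with the binomial expansion $(1-x)^{k-1}=\sum_{j=0}^{k-1}(-1)^j\binom{k-1}{j}x^j$. To avoid the negative powers arising from the $x^{-(k-1)}$ hidden in $\left(\frac{1-x}{x}\right)^{k-1}$, I would clear denominators by multiplying through by $x^{k-1}$, obtaining the honest power-series identity
\[
x^{k-1}\zetafp[n]=\left(\prod_{i=1}^k\zetafp[n_i]\right)(1-x)^{k-1}.
\]
The coefficient of $x^{v}$ on the left is $\multiset{v-k+1}{n}$, while on the right the Cauchy product structure, together with the fact that the coefficient of $x^{V}$ in $\prod_i\zetafp[n_i]$ is $\sum_{\sum v_i=V,\,v_i\geq 1}\prod_i\multiset{v_i}{n_i}$, yields $\sum_{j=0}^{k-1}(-1)^j\binom{k-1}{j}\sum_{\sum v_i=v-j,\,v_i\geq 1}\prod_i\multiset{v_i}{n_i}$. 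Equating the two coefficients is precisely the claimed identity.

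The one delicate point, and the step I expect to be the main obstacle, is the bookkeeping for the index shift introduced by the factor $1/x^{k-1}$ in $*^+$: this is exactly what turns the left-hand side into $\multiset{v-k+1}{n}$ rather than $\multiset{v}{n}$, and it is what dictates the hypothesis on $v$. The condition $v\geq n+k-1$ guarantees that the extracted coefficient agrees with the genuine multiset coefficient and that the constituent sums run over their nonempty ranges, so that the identity holds in the stated clean form. As a consistency check, the same identity can be read off from Proposition~\ref{prop:new} after the substitution $v_i=m_i-n_i+1$ (so that $m_i\geq n_i$ becomes $v_i\geq 1$), using $\binom{m_i}{n_i}=\multiset{v_i}{n_i}$ and $\binom{m}{n}=\multiset{m-n+1}{n}$; this is what makes the two statements equivalent, as asserted in the text.
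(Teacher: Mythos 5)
Your proof is correct and follows exactly the route the paper intends: the paper offers no separate argument for this proposition, merely remarking that it follows ``using $\zetafp[n]$ and $*^+$'' in parallel with Proposition~\ref{prop:new}, which is precisely your coefficient extraction from the identity $x^{k-1}\zetafp[n]=\left(\prod_{i=1}^k\zetafp[n_i]\right)(1-x)^{k-1}$ obtained by iterating Proposition~\ref{prop:mu+}. As a minor point, your derivation produces the sign $(-1)^{k-1}\binom{k-1}{k-1}$ on the final term, confirming that the missing $(-1)^{k-1}$ in the paper's stated formula is a typographical slip rather than a discrepancy with your argument.
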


Following the ideas of the previous section, we define  $f(x)*^-g(x)= f(x)\frac{1}{x} g(x)$ and generate the monoid over $\frac{x}{(1-x)^2}$. In this case we study the combinatorics of the product of Ehrhart series (with respect to the basis formed by chains), see \cite[Problem 6.3]{LN}. We associate the power series $\zetafm[1]=\frac{x}{(1-x)^2}$ to the chain $\chain[1]$ and extend by $\zetafm[n]=\zetafm[n-1]*^-\zetafm[1]$ to get $\zetafm[n]=\frac{x}{(1-x)^{2n}}$. Applications of this structure to probability theory will be explained in Section~\ref{sec:prob}.

Using the expansion of the Maclaurin series, we obtain the Vandermonde identity for negative integers:
\begin{proposition}
Let $n=n_1+\cdots+n_k$. 
Then for $v\geq k$ :\label{prop:gen3}
\begin{equation}\label{eqn:gen3}
 \multiset{v-k+1}{2n-1} =\sum_{\substack{\sum v_i=v\\ v_i\geq 1}}
 \prod_{i=1}^k \multiset{v_i}{2n_i-1}.
\end{equation}
\end{proposition}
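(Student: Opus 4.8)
The plan is to mirror the proofs of Proposition~\ref{prop:new} and Proposition~\ref{eqn:gen}, replacing the operations $*$ and $*^+$ by the operation $*^-$ and then reading off the coefficient of $x^v$. The key structural input is that $*^-$ is associative and that $\frac{x}{(1-x)^2}$ generates the series $\zetafm[n]=\frac{x}{(1-x)^{2n}}$, so that for any decomposition $n=n_1+\cdots+n_k$ (with each $n_i\geq 1$) we have
\[
\zetafm[n]=\zetafm[n_1]*^-\cdots*^-\zetafm[n_k].
\]
Since each application of $*^-$ inserts a factor $\frac{1}{x}$, the right-hand side equals $x^{-(k-1)}\prod_{i=1}^k\zetafm[n_i]$, so that $\prod_{i=1}^k\zetafm[n_i]=x^{k-1}\zetafm[n]=\frac{x^k}{(1-x)^{2n}}$.

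First I would expand the ordinary Cauchy product $\prod_{i=1}^k\zetafm[n_i]$. By Equation~\eqref{eqn:other1} (with $m=2n_i-1$) each factor is $\zetafm[n_i]=\sum_{v_i\geq 1}\multiset{v_i}{2n_i-1}x^{v_i}$, so the coefficient of $x^v$ in the product is precisely $\sum_{\sum v_i=v,\,v_i\geq 1}\prod_{i=1}^k\multiset{v_i}{2n_i-1}$, which is the right-hand side of the claimed identity. The constraint $v_i\geq 1$ is automatic because each factor starts in degree one.

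Next I would compute the same coefficient from the closed form. Writing $\frac{x^k}{(1-x)^{2n}}=x^{k-1}\cdot\frac{x}{(1-x)^{2n}}$ and applying Equation~\eqref{eqn:other1} once more (now with $m=2n-1$), the coefficient of $x^v$ equals the coefficient of $x^{v-k+1}$ in $\frac{x}{(1-x)^{2n}}$, namely $\multiset{v-k+1}{2n-1}$. Equating the two expressions for the coefficient of $x^v$ yields the identity. The hypothesis $v\geq k$ is exactly the condition $v-k+1\geq 1$ needed for this coefficient to lie in the range where Equation~\eqref{eqn:other1} applies; for $v<k$ the right-hand side is an empty sum and the left-hand side vanishes as well.

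There is essentially no serious obstacle here: unlike the $*$ and $*^+$ cases, the gluing factor for $*^-$ is $\frac{1}{x}$ rather than $(1-x)$ or $\frac{1-x}{x}$, so no power $(1-x)^{k-1}$ has to be expanded and no alternating sum appears — this is what makes the identity a clean (negative) Vandermonde rather than the inclusion–exclusion–type formulas of the earlier propositions. The only care needed is bookkeeping the degree shift $x^{k-1}$ coming from the $k-1$ copies of $\frac{1}{x}$ and tracking the range conditions $v_i\geq 1$ and $v\geq k$.
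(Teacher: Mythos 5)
Your proof is correct and takes essentially the same route as the paper: the paper's one-line proof invokes exactly the factorization $\zetafm[n]=\zetafm[n_1]*^-\zetafm[n_2]\cdots*^-\zetafm[n_k]$ together with the Maclaurin expansion of $\frac{x}{(1-x)^{2n}}$, which is what you carry out explicitly. Your version merely makes the degree shift $x^{k-1}$ and the coefficient comparison via Equation~\eqref{eqn:other1} explicit, which the paper leaves to the reader.
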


\begin{proof}\label{Cor:41}
It follows from $\zetafm[n]=\zetafm[n_1]*^-\zetafm[n_2]\cdots\zetafm[n_k]$.
\end{proof}

From Equation~\eqref{eqn:gen3} and $n=1+\cdots+1$ we obtain for $v\geq n$
\begin{equation}\label{eq:noprod}
 {v+n-1\choose 2n-1} =\sum_{\substack{\sum v_i=v\\ v_i\geq 1}}
   \prod_{i=1}^n {v_i}.
\end{equation}
This gives a formula to compute certain binomial coefficients without using any division. 
\begin{proposition}
Given $q \in \mathbb{N}$, then for $n\leq\frac{m+1}{2}$ we have \begin{equation}\label{eq:nodiv}
  {q\choose 2n-1}=\sum_{\substack{\sum v_i=q-(n-1)\\ v_i\geq 1}}
 \prod_{i=1}^n {v_i}. 
\end{equation}

\end{proposition}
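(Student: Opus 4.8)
The plan is to derive this as the special case $k=n$ of Proposition~\ref{prop:gen3} (equivalently Equation~\eqref{eq:noprod}), after a change of variables that absorbs the alternating factor $(1-x)^{k-1}$. First I would specialize the partition $n=n_1+\cdots+n_k$ to the finest one, $n=\underbrace{1+\cdots+1}_{n}$, so that $k=n$ and each $n_i=1$. Since $\multiset{v_i}{2n_i-1}=\multiset{v_i}{1}=\binom{v_i+1}{1}=v_i$, the product on the right of \eqref{eqn:gen3} collapses to $\prod_{i=1}^n v_i$, which is precisely the content of \eqref{eq:noprod}: for $v\geq n$,
\[
\multiset{v-n+1}{2n-1}=\sum_{\substack{\sum v_i=v\\ v_i\geq 1}}\prod_{i=1}^n v_i.
\]

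Next I would unwind the multiset coefficient on the left using the definition $\multiset{a}{b}=\binom{a+b+1}{b}$ together with the symmetry \eqref{eqn:upside1}. Writing $\multiset{v-n+1}{2n-1}=\binom{(v-n+1)+(2n-1)+1}{2n-1}=\binom{v+n+1}{2n-1}$ — or whichever normalization matches the paper's conventions — I would then set $m=v+n-1$, so that the binomial reads $\binom{m}{2n-1}$ and the constraint $v\geq n$ becomes $m\geq 2n-1$, i.e.\ $n\leq\frac{m+1}{2}$, matching the hypothesis exactly. The summation index transforms as $\sum v_i=v=m-(n-1)$, reproducing the condition $\sum v_i=m-(n-1)$ with $v_i\geq 1$ in \eqref{eq:nodiv}.

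The main step is therefore purely a reindexing: substitute $v=m-n+1$ into \eqref{eq:noprod} and check that the binomial coefficient on the left is exactly $\binom{m}{2n-1}$ under the paper's multiset convention. The only genuine obstacle I anticipate is bookkeeping the off-by-one shifts in the definition $\multiset{n}{k}=\binom{n+k+1}{k}$ so that the top entry of the binomial comes out to $m$ rather than $m\pm 1$; this is where a sign or index slip would hide. Concretely, the proof is a one-line deduction:

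\begin{proof}
Apply Equation~\eqref{eq:noprod} with $v=m-(n-1)$. The hypothesis $n\leq\frac{m+1}{2}$ is equivalent to $m-(n-1)\geq n$, i.e.\ $v\geq n$, so \eqref{eq:noprod} applies and yields
\[
\binom{v+n-1}{2n-1}=\sum_{\substack{\sum v_i=v\\ v_i\geq 1}}\prod_{i=1}^n v_i.
\]
Since $v+n-1=m$ and $v=m-(n-1)$, the left-hand side is $\binom{m}{2n-1}$ and the summation constraint becomes $\sum v_i=m-(n-1)$, which is exactly \eqref{eq:nodiv}.
\end{proof}
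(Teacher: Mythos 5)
Your proof is correct and is essentially the paper's own argument: the paper likewise obtains Equation~\eqref{eq:noprod} by specializing Proposition~\ref{prop:gen3} (i.e.\ Equation~\eqref{eqn:gen3}) to the partition $n=1+\cdots+1$, and then proves \eqref{eq:nodiv} by exactly your substitution $v=m-(n-1)$, with $v\geq n$ translating to $n\leq\frac{m+1}{2}$. The only blemish is your intermediate multiset arithmetic (as written, $\binom{v_i+1}{1}=v_i+1$, not $v_i$; note the paper's displayed definition $\multiset{n}{k}=\binom{n+k+1}{k}$ is itself a typo for $\binom{n+k-1}{k}$, as Equation~\eqref{eqn:other1} shows), but since your final deduction invokes only \eqref{eq:noprod} as stated, this does not affect correctness.
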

\begin{proof}
 It follows from Equation \eqref{eq:noprod}.
\end{proof}

The latter identity requires that the lower entry in the binomial coefficient must be odd. Consider instead $n=1+\cdots+1$. Then using Proposition \ref{prop:new}, for $m\geq 2n-1$,

\begin{multline}\label{eq:nodiv2}
{m \choose n}={n-1\choose 0}\sum_{\substack{\sum m_i=m\\ m_i\geq 1}}\prod_{i=1}^n m_i -{n-1\choose 1}\sum_{\substack{\sum m_i=m-1\\ m_i\geq 1}}\prod_{i=1}^n m_i\pm\cdots
\\
+(-1)^{n-2}{n-1\choose n-2}\sum_{\substack{\sum m_i=m-(n-2)\\ m_i\geq 1}}\prod_{i=1}^n {q_i}+   (-1)^{n-1}{n-1\choose n-1}\sum_{\substack{\sum m_i=m-(n-1)\\ m_i\geq 1}}\prod_{i=1}^n {q_i}. 
\end{multline}

If we use $n^2=n+\cdots+n$, then using  Proposition \ref{prop:new},  for $m\geq n^2+n-1$,
\begin{equation*}
{m \choose n^2}={n-1\choose 0}\sum_{\substack{\sum m_i=m\\ m_i\geq n}}\prod_{i=1}^n {q_i\choose n} -{n-1\choose 1}\sum_{\substack{\sum m_i=m-1\\ m_i\geq n}}\prod_{i=1}^n {q_i\choose n}\pm\cdots
\end{equation*}
\begin{equation}
+(-1)^{n-2}{n-1\choose n-2}\sum_{\substack{\sum m_i=m-(n-2)\\ m_i\geq n}}\prod_{i=1}^n {q_i\choose n}+   (-1)^{n-1}{n-1\choose n-1}\sum_{\substack{\sum m_i=m-(n-1)\\ m_i\geq n}}\prod_{i=1}^n {q_i\choose n}.
\end{equation}

Similar identities allow us to compute ${m \choose n^k}$ in terms of ${m \choose n^r}$ for $r<k$.

On Equation~\eqref{eq:noprod}, for every partition $\sum v_i=v$, if  $v_i$ appears $s_i$ times then the term $\prod_i v_i$ will appear as many times as the \emph{multinomial coefficient} ${n \choose s_1,\cdots, s_j}$. We compute

\begin{equation}\label{eq:short}
 {v+n-1\choose 2n-1} =\sum_
 {\substack{ \sum v_i=v\\ \#v_i=s_i }}
 {n \choose s_1,\cdots, s_j}\prod_{i=1}^n {v_i}, 
\end{equation}
where the sum is over different $n$-partitions of $v$, and each  term $v_i$ appears $s_i$ times.  

Finally, we apply Proposition \ref{eqn:gen} to the case $2n-k=\sum^k_{i=1} 2n_i-1, v\geq 2n-1$.  
\begin{multline*}
 \multiset{v-k+1}{2n-k} ={k-1\choose 0}\sum_{\substack{\sum v_i=v\\ v_i\geq 1}}
 \prod_{i=1}^k \multiset{v_i}{2n_i-1} -{k-1\choose 1}\sum_{\substack{\sum v_i=v-1\\ v_i\geq 1}}
 \prod_{i=1}^k \multiset{v_i}{2n_i-1}\pm
 \\
 \cdots+(-1)^{k-2}{k-1\choose k-2}\sum_{\substack{\sum v_i=v-(k-2)\\ v_i\geq 1}}
 \prod_{i=1}^k \multiset{v_i}{2n_i-1} +   {k-1\choose k-1}\sum_{\substack{\sum v_i=v-(k-1)\\ v_i\geq 1}}
 \prod_{i=1}^k \multiset{v_i}{2n_i-1}.
\end{multline*}

We then substitute Equation~\eqref{eqn:gen3} to obtain, for $ v\geq 2n-1+k-1$,
\begin{multline*}
\multiset{v-k+1}{2n-k} ={k-1\choose 0}\multiset{v-k+1}{2n-1} -{k-1\choose 1}\multiset{v-k}{2n-1}\pm\cdots
\\
+(-1)^{k-2}{k-1\choose k-2}\multiset{v-2k+3}{2n-1} +   (-1)^{k-1}{k-1\choose k-1}\multiset{v-2k+2}{2n-1},
\end{multline*}
or 
\begin{multline*}
 {v-2k+2n\choose 2n-k} ={k-1\choose 0}{v-k-1+2n\choose 2n-1} -{k-1\choose 1}{v-k-2+2n\choose 2n-1}\pm\cdots\\
 +(-1)^{k-2}{k-1\choose k-2}{v-2k+1+2n\choose 2n-1} +   (-1)^{k-1}{k-1\choose k-1}{v-2k+2n\choose 2n-1}.
\end{multline*}

Our identities come from objects in which a notion of  series-parallel order is defined. It is expected that new series-parallel algebras will imply new properties of binomial coefficients.

\subsection{Finite partitions that allow empty sets}\label{subsec:partitions} Define  $\tilde{n}(m,k)=\sum_{\sum_{i=1}^k m_i=m}1$. In words, $\tilde{n}(m,k)$  is the number of partitions of $m$ points into $k$ sets that can be empty. 
 Let $n=0$, $k\in \mathbb{N}$, $m\geq k-1$. From Proposition~\ref{prop:new} we obtain the identity
\begin{multline}
1={k-1\choose 0}\Tilde{n}(m,k) -{k-1\choose 1}\Tilde{n}(m-1,k)+\cdots\\
     +(-1)^{k-2}{k-1\choose k-2}\Tilde{n}(m-(k-2),k)+   (-1)^{k-1}{k-1\choose k-1}\Tilde{n}(m-(k-1),k).
\end{multline}    
The Stirling number of the second kind ${m \brace k}$ is defined as the number of partitions of a set with $m$ elements into $k$ non-empty subsets. 
It follows that
\begin{equation*}\Tilde{n}(m,k) = {k\choose 0}{m \brace k} + {k\choose 1} {m \brace k-1}+\cdots+{k\choose k-1} {m \brace 1}.\end{equation*}

\subsection{Probability}\label{sec:prob}

Consider $N$ classes of objects. Take $V$ objects with repetition. Divide the $N$ classes into groups, the first with $n_1$ classes, the next with $n_2$ classes, and so on. We assume $N=n_1+\cdots+n_k$. Let $V=v_1+\cdots+v_k$. What is the probability that $v_1$ objects belong to the group of $n_1$ fixed classes, and $v_2$ objects belong to the group of $n_2$ fixed classes, and so on? The answer comes from the multivariate negative hypergeometric distribution, 
 \begin{equation*} P(X=(v_1,\cdots,v_k))= \frac{\multiset{n_1}{v_1}\dots\multiset{n_k}{v_k} }{\multiset{N}{V}}.\end{equation*}
As an application of our methods we provide a direct proof that $P$ is a distribution.
 
\begin{proposition}
The function $P$ is a distribution.
\end{proposition}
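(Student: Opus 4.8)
The plan is to reduce the claim to a single Vandermonde-type identity. Since $P$ is a ratio with constant denominator $\multiset{N}{V}$ and nonnegative numerators (multiset coefficients are nonnegative, so $P\geq 0$ is immediate), the only thing to check is that the numerators sum to the denominator, i.e.
\[
\sum_{\substack{v_1+\cdots+v_k=V\\ v_i\geq 0}}\ \prod_{i=1}^k\multiset{n_i}{v_i}=\multiset{N}{V},\qquad N=n_1+\cdots+n_k.
\]
This is precisely the negative-exponent Vandermonde identity underlying Proposition~\ref{prop:gen3}, now for arbitrary (not necessarily odd) parameters, so I would prove it by the same generating-function mechanism used there for $\zetafm$ and $*^-$.

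First I would record the one-class generating function. Using Equation~\eqref{eqn:other1} together with the reflection \eqref{eqn:upside1}, fixing the number of classes $n$ and summing over the number of objects $v$ gives
\[
\sum_{v\geq 0}\multiset{n}{v}x^{v}=\frac{1}{(1-x)^{n}}.
\]
Multiplying these series over the $k$ groups, the product $\prod_{i=1}^k (1-x)^{-n_i}=(1-x)^{-N}$ simply adds the exponents, which is the algebraic shadow of merging the groups into one family of $N$ classes; it is the same collapse by which $\zetafm[n_1]*^-\cdots *^-\zetafm[n_k]=\zetafm[n]$ reduces a $*^-$-product to a single negative series. Reading off the coefficient of $x^V$ on the left (via the Cauchy product) and on the right (via the same generating function with parameter $N$) yields the displayed identity, hence $\sum P=1$.

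The only delicate point, and the step I would be most careful about, is the index bookkeeping. The generating function in \eqref{eqn:other1} runs its summation index in the \emph{first} slot of $\multiset{\cdot}{\cdot}$ and starts at $1$, whereas the distribution fixes the first slot $n_i$ and runs the second slot $v_i$ from $0$. Converting between the two via \eqref{eqn:upside1}, and absorbing the shift that turns the starting index $1$ into $0$ (which accounts for the factor of $x$ in $\frac{x}{(1-x)^{n+1}}$ versus $\frac{1}{(1-x)^{n}}$), is exactly what makes the $v_i=0$ terms appear with the correct multiplicity and what matches the $*^-$ normalization. Once the conventions are aligned the coefficient comparison is routine, and the conclusion that $P$ is a probability distribution follows.
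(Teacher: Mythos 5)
Your proposal is correct and is essentially the paper's own argument: both reduce the claim to the multiset-coefficient Vandermonde identity $\sum_{\sum v_i=V}\prod_{i=1}^k\multiset{n_i}{v_i}=\multiset{N}{V}$, proved by the same generating-function mechanism of multiplying negative binomial series, $\prod_{i=1}^k(1-x)^{-n_i}=(1-x)^{-N}$, and comparing coefficients of $x^V$. The only difference is bookkeeping: the paper routes through the $*^-$ structure of Proposition~\ref{prop:gen3} with shifted series $\frac{x}{(1-x)^{n_i}}$ (indices $v_i\geq 1$) and then applies the reflection \eqref{eqn:upside1} and a change of variables, whereas you work with the unshifted series $\sum_{v\geq 0}\multiset{n}{v}x^v=(1-x)^{-n}$ from the start.
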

\begin{proof}
 From $\frac{x}{(1-x)^{N}}=\frac{x}{(1-x)^{n_1}}\frac{1}{x}\cdots\frac{1}{x}\frac{x}{(1-x)^{n_k}}$ and Proposition~\ref{prop:gen3} we obtain \begin{equation*}\multiset{V-(k-1)}{N-1} =\sum_{\substack{\sum v_i=V\\ v_i\geq 1}}
 \prod_{i=1}^k \multiset{v_i}{n_i-1}.\end{equation*}
 Equivalently,
\begin{equation*}\multiset{N}{V-k} =\sum_{\substack{\sum v_i=V\\ v_i\geq 1}}
 \prod_{i=1}^k \multiset{n_i}{v_i-1},\end{equation*}
 and by changing variables we have
 \begin{equation*}\multiset{N}{W} =\sum_{\substack{\sum w_i=W\\ w_i\geq 0}}
 \prod_{i=1}^k \multiset{n_i}{w_i} \ \text{ and } \ 1 =\sum_{\substack{\sum w_i=W\\ w_i\geq 0}}
 \frac{\prod_{i=1}^k \multiset{n_i}{w_i}}{\multiset{N}{W}}.\end{equation*}
\end{proof}
We also provide a method to compute the expectation value of the negative hypergeometric distribution:
\begin{eqnarray*}
\sum_{W=1} x^W\sum_{w=1}^W w\multiset{n}{w}\multiset{N-n}{W-w}&=&\left(\sum_{i=1} i\multiset{n}{i}x^i\right)\left(\sum_{j=1} \multiset{N-n}{j}x^j\right)
\\ 
&=&\left(x\frac{d}{dx}\left(\frac{1}{(1-x)^{n}}\right)\right)
 \frac{1}{(1-x)^{N-n}}\\
 &=&\frac{nx}{(1-x)^{N+1}}\\
 &=&\sum_{W=1} n\multiset{N+1}{W-1}x^W.\end{eqnarray*}
It follows that \begin{equation*}\mathbb{E}(P)=\sum_{w=0}^W \frac{w\multiset{n}{2}\multiset{N-n}{W-w}}{\multiset{N}{W}}=\frac{n\multiset{N+1}{W-1}}{\multiset{N}{W}}.\end{equation*}

\section{Open problems}
We conclude with a list of open problems.

\begin{itemize}\label{questions}
 \item Do the polynomials in Remark \ref{rem:moebius} have a combinatorial interpretation? 
    \item Is there a combinatorial proof of Proposition~\ref{Proposition:pc}?
    \item Is there a proof of Proposition~\ref{prop:new} by computing the Euler characteristic or using the inclusion-exclusion principle?
    \item What is the topological story behind the $*^-$ structure linked to the negative hypergeometric distribution?
    \item What are the generators of the operad of finite posets?
\end{itemize}
\section*{Acknowledgements}
We thank Elton P. Hsu for carefully reading the section on probability. We thank Max Hopkings, Isaias Marin Gaviria, and Mario Sanchez for several discussions.  We thank John Baez for online discussions about binomial identities and Theo Johnson-Freyd for discussions about the units of the operad of series-parallel posets. Furthermore, we thank Raman Sanyal for suggesting the reference \cite{crt}, and the anonymous referee(s) for many helpful comments and for pointing us to the papers \cite{algo}, \cite{beginning} and \cite{prior}. The second author was funded through the Royal Society grant URF$\setminus$R1$\setminus$20147. The third author was funded by the National Research Foundation of Korea (NRF) grant funded by the Korea government (MSIT) (No. 2020R1C1C1A0100826).
 This project started while the third author worked at NewSci Labs and he thanks them for their hospitality.
\bibliographystyle{plain}
\bibliography{references}

\end{document}